\newtheorem{theorem}{Theorem}[section]
\newtheorem{corollary}[theorem]{Corollary}
\newtheorem{lemma}[theorem]{Lemma}
\newtheorem{proposition}[theorem]{Proposition}
\theoremstyle{definition}
\newtheorem{definition}[theorem]{Definition}
\newtheorem{remark}[theorem]{Remark}
\newtheorem*{question}{Question}
\newtheorem{example}[theorem]{Example}
\theoremstyle{remark}
\renewcommand{\theclaim}{\textup{\theclaim}}
\newtheorem*{acknowledgements}{Acknowledgements}
\numberwithin{equation}{section}
\def\openone
\newbox\ipbox
\newcommand{\diracb}[1]{\left\langle #1\mathrel{\mathchoice

{\setbox\ipbox=\hbox{$\displaystyle \left\langle\mathstrut
#1\right.$}

\vrule height\ht\ipbox width0.25pt depth\dp\ipbox}

{\setbox\ipbox=\hbox{$\textstyle \left\langle\mathstrut
#1\right.$}

\vrule height\ht\ipbox width0.25pt depth\dp\ipbox}

{\setbox\ipbox=\hbox{$\scriptstyle \left\langle\mathstrut
#1\right.$}

\vrule height\ht\ipbox width0.25pt depth\dp\ipbox}

{\setbox\ipbox=\hbox{$\scriptscriptstyle \left\langle\mathstrut
#1\right.$}

\vrule height\ht\ipbox width0.25pt depth\dp\ipbox}

}\right. }
\newcommand{\dirack}[1]{\left. \mathrel{\mathchoice

{\setbox\ipbox=\hbox{$\displaystyle \left.\mathstrut
#1\right\rangle$}

\vrule height\ht\ipbox width0.25pt depth\dp\ipbox}

{\setbox\ipbox=\hbox{$\textstyle \left.\mathstrut
#1\right\rangle$}

\vrule height\ht\ipbox width0.25pt depth\dp\ipbox}

{\setbox\ipbox=\hbox{$\scriptstyle \left.\mathstrut
#1\right\rangle$}

\vrule height\ht\ipbox width0.25pt depth\dp\ipbox}

{\setbox\ipbox=\hbox{$\scriptscriptstyle \left.\mathstrut
#1\right\rangle$}

\vrule height\ht\ipbox width0.25pt depth\dp\ipbox}

} #1\right\rangle}
\newcommand{\cj}[1]{\overline{#1}}
\newcommand{\bz}{\mathbb{Z}}
\newcommand{\br}{\mathbb{R}}
\newcommand{\bc}{\mathbb{C}}
\newcommand{\bn}{\mathbb{N}}
\def\blfootnote{\xdef\@thefnmark{}\@footnotetext}
\renewcommand{\mod}{\operatorname{mod}}
\def\H{\mathcal{H}}
\def\-{^{-1}}
\def\D{\mathcal{D}}
\def\ty{\emptyset}
\begin{document}

\title[Divergence of mock and scrambled Fourier series]{Divergence of mock and scrambled Fourier series on fractal measures}
\author{Dorin Ervin Dutkay}
\blfootnote{}
\address{[Dorin Ervin Dutkay] University of Central Florida\\
	Department of Mathematics\\
	4000 Central Florida Blvd.\\
	P.O. Box 161364\\
	Orlando, FL 32816-1364\\
U.S.A.\\} \email{Dorin.Dutkay@ucf.edu}

\author{Deguang Han}
\address{[Deguang Han] University of Central Florida\\
    Department of Mathematics\\
    4000 Central Florida Blvd.\\
    P.O. Box 161364\\
    Orlando, FL 32816-1364\\
U.S.A.\\} \email{Deguang.Han@ucf.edu}

\author{Qiyu Sun}
\address{[Qiyu Sun] University of Central Florida\\
    Department of Mathematics\\
    4000 Central Florida Blvd.\\
    P.O. Box 161364\\
    Orlando, FL 32816-1364\\
U.S.A.\\} \email{qiyu.sun@ucf.edu}
\thanks{}
\subjclass[2000]{28A80,28A78, 42B05}
\keywords{Fourier series, Dirichlet kernel, Hilbert space, fractal, selfsimilar, iterated function system, Hadamard matrix}

\begin{abstract}
  We study divergence properties of Fourier series on Cantor-type fractal measures, also called mock Fourier series. We show that in some cases the $L^1$-norm of the corresponding Dirichlet kernel grows exponentially fast, and therefore the Fourier series are not even pointwise convergent. We apply these results to the Lebesgue measure to show that a certain rearrangement of the exponential functions, with affine structure, which we call scrambled Fourier series, have a corresponding Dirichlet kernel whose $L^1$-norm grows exponentially fast, which is much worse than the known logarithmic bound. The divergence properties are related to the Mahler measure of certain polynomials and to spectral properties of Ruelle operators.
\end{abstract}

\thanks{This research is partially  supported in part by NSF grant (DMS-1106934 and DMS-1109063) and by a grant from the Simons Foundation \#228539.}
\maketitle \tableofcontents
\section{Introduction}\label{intr}

In \cite{JoPe98} Jorgensen and Pedersen proved that orthogonal Fourier series can be constructed even for some fractal Cantor measures. They considered the Cantor set obtained by dividing the unit interval into four equal pieces and keeping the first and the third piece, and iterating the procedure. The measure $\mu_4$ on this Cantor set is the Hausdorff measure of dimension $\ln 2/\ln 4=1/2$. Define
\begin{equation}
\Lambda:=\left\{\sum_{k=0}^n4^kl_k : l_k\in\{0,1\},n\in\bn \right\}.
\label{eq0.1}
\end{equation}
They proved that the set of exponential functions $\{e^{2\pi i\lambda x}: \lambda\in\Lambda\}$ is an orthonormal basis for $L^2(\mu_4)$. More general examples were later constructed even in higher dimensions \cite{MR1785282,MR1929508,DJ06,DJ07d,MR2338387}.

In \cite{MR2279556}, Strichartz proved the surprising result that the Fourier series for the Jorgensen-Pedersen example have much better convergence properties than their classical counterparts on the unit interval: for example, Fourier series of continuous functions on this Cantor set converge uniformly to the function. To prove this result, Strichartz showed that the corresponding Dirichlet kernel is actually an approximate identity convolution kernel.

In this paper, we will show that this is not always the case, and that for some choices of the digits, the $L^1$-norm of the Dirichlet kernel can grow exponentially fast, a situation much worse even than the known growth in the classical case, which is logarithmic. For example, if for the Jorgensen-Pedersen example, we change the digits in \eqref{eq0.1} from $\{0,1\}$ to $\{0,17\}$ then we still get an orthonormal basis, but the $L^1$-norm of the Dirichlet kernel grows exponentially fast, so the Fourier series do not converge even pointwise.

We will study measures generated by affine iterated function systems (Definition \ref{def1.1}). Lebesgue measure on self-affine tiles appears as a particular example. We show in Section \ref{sec3} that certain rearrangements of the classical Fourier series have the $L^1$-norm of the Dirichlet kernel growing exponentially fast. The divergence rate is related to the Mahler measure of a polynomial associated to this rearrangement.

\begin{definition}\label{def1.3}

We will denote by $e_t$ the exponential function
$$e_t(x)=e^{2\pi it\cdot x},\quad(x,t\in\br).$$

Let $\mu$ be a Borel probability measure on $\br$. We say that $\mu$ is a {\it spectral} measure if there exists a subset $\Lambda$ of $\br$ such that the family $E(\Lambda):=\{e_\lambda : \lambda\in\Lambda\}$ is an orthonormal basis for $L^2(\mu)$. In this case $\Lambda$ is called a {\it spectrum} for the measure $\mu$ and we say that $(\mu,\Lambda)$ is a {\it spectral pair}.
\end{definition}

\begin{definition}\label{def1.1}
We will use the following assumptions throughout the paper.
Let $R$ be a positive integer $R>1$, and let $B$ be a finite subset of $\bz$. In addition, we assume that $0\in B$.

We denote by $N$ the cardinality of $B$. Define the maps
\begin{equation}
\tau_b(x)=R^{-1}(x+b),\quad(x\in\br,b\in B).
\label{eq1.1}
\end{equation}

We call $(\tau_b)_{b\in B}$ the (affine) iterated function system (IFS) associated to $R$ and $B$.

By \cite{Hut81}, there exists a unique compact set $X_B$ called {\it the attractor} of the IFS $(\tau_b)_{b\in B}$ such that
\begin{equation}
X_B=\bigcup_{b\in B}\tau_b(X_B).
\label{eq1.2}
\end{equation}
In our case, it can be written explicitly
\begin{equation}
X_B=\left\{\sum_{k=1}^\infty R^{-k}b_k : b_k\in B\mbox{ for all }k\geq1\right\}.
\label{eq1.3}
\end{equation}

There exists a unique Borel probability measure $\mu=\mu_B$ such that
\begin{equation}
\mu(E)=\frac{1}{N}\sum_{b\in B}\mu(\tau_b^{-1}(E))\mbox{ for all Borel subsets }E\mbox{ of }\br.
\label{eq1.4}
\end{equation}
Equivalently we have the {\it invariance equation}:
\begin{equation}
\int f\,d\mu=\frac1N\sum_{b \in B}\int f\circ \tau_b\,d\mu\mbox{ for all bounded Borel function }f\mbox{ on }\br.
\label{eq1.5}
\end{equation}
The measure $\mu$ is called the {\it invariant measure} of the IFS $(\tau_b)_{b\in B}$. It is supported on $X_B$.

We say that the measure $\mu$ {\it has no overlap} if
\begin{equation}
\mu(\tau_b(X_B)\cap \tau_{b'}(X_B))=0,\mbox{ for all }b\neq b'\in B.
\label{eq1.6}
\end{equation}

\end{definition}

\begin{definition}\label{def1.2}
Let $R$ be given as above.
Let $B$ and $L$ be two subsets of $\bz$ of the same cardinality $N$, with $0\in B$ and $0\in L$. We say that $(B, L)$ form a {\it Hadamard pair} if the following matrix is unitary:
\begin{equation}
\frac{1}{\sqrt N}\left(e^{2\pi i R^{-1}b\cdot l}\right)_{b\in B,l\in L}.
\label{eq1.8}
\end{equation}
We define
\begin{equation}
m_B(x)=\frac{1}{N}\sum_{b\in B}e^{2\pi i b\cdot x},\quad(x\in\br).
\label{eqmb}
\end{equation}

\begin{equation}
m_L(x)=\frac{1}{N}\sum_{l\in L}e^{2\pi i l\cdot x},\quad(x\in\br).
\label{eqml}
\end{equation}

\end{definition}

\begin{definition}
Define the maps $\sigma_l(x)=R^{-1}(x+l)$, $l\in L$. A set of points $C:=\{x_0,x_1,\dots,x_{p-1}\}$ is called an {\it $L$-cycle} if there exist $l_0,\dots,l_{p-1}$ such that $\sigma_{l_0}x_0=x_1,\dots,\sigma_{l_{p-2}}x_{p-2}=x_{p-1}$ and $\sigma_{l_{p-1}}x_{p-1}=x_0$. The $L$-cycle is called {\it extreme} if in addition $|m_B(x_i)|=1$ for all $i\in\{0,\dots,p-1\}$.

\end{definition}

Throughout the paper we will make the following assumptions:
\begin{equation}
\mbox{ $R>1$, $R\in\bz$, $0\in B,L\subset\bz$, $N=\#B=\#L$ and $(B,L)$ form a Hadamard pair.}
\label{eqass}
\end{equation}

\medskip

\begin{proposition}\label{pr1.1}
Assume $(B,L)$ form a Hadamard pair. Then
\begin{enumerate}
	\item The function $m_B$ satisfies the equation
		\begin{equation}
		\sum_{l\in L} |m_B(\sigma_l(x))|^2=1,\quad(x\in\br);
		\label{eqqmfb}
		\end{equation}
	\item The function $m_L$ satisfies the equation
	\begin{equation}
\sum_{b\in B}|m_L(\tau_b(x))|^2=1,\quad(x\in\br)
\label{eqqmfl}
\end{equation}

\end{enumerate}
\end{proposition}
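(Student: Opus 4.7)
The plan is to reduce both identities to the unitarity of the Hadamard matrix
$$ H = \frac{1}{\sqrt N}\bigl(e^{2\pi i R^{-1}b\cdot l}\bigr)_{b\in B,\,l\in L}. $$
For part (i), I would expand
$$ m_B(\sigma_l(x)) = \frac{1}{N}\sum_{b\in B} e^{2\pi i b\cdot R^{-1}(x+l)} = \frac{1}{\sqrt N}\sum_{b\in B} H_{b,l}\,e^{2\pi i R^{-1}b\cdot x}. $$
So if I introduce the column vector $v(x)\in\bc^B$ with entries $v(x)_b = e^{2\pi i R^{-1}b\cdot x}$, the identity above reads $(m_B(\sigma_l(x)))_{l\in L} = \frac{1}{\sqrt N}\,H^{T} v(x)$.

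Next I would note that since $H$ is unitary, so is $H^T$, hence
$$ \sum_{l\in L}|m_B(\sigma_l(x))|^2 = \frac{1}{N}\,\|H^{T}v(x)\|^2 = \frac{1}{N}\,\|v(x)\|^2 = \frac{1}{N}\sum_{b\in B}1 = 1, $$
which is \eqref{eqqmfb}.

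For part (ii) I would run the same computation with the roles of $B$ and $L$ interchanged. The key observation is that $(B,L)$ being a Hadamard pair is a symmetric condition: the matrix $\frac{1}{\sqrt N}(e^{2\pi i R^{-1}l\cdot b})_{l\in L, b\in B}$ is the transpose of $H$, hence also unitary, so $(L,B)$ is a Hadamard pair too. The same argument as above then gives $\sum_{b\in B}|m_L(\tau_b(x))|^2 = 1$.

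There is no real obstacle here; the proof is a one-line unraveling of definitions combined with the unitarity of $H$. The only thing to be careful about is matching up indices correctly (writing $m_B(\sigma_l(x))$ as the $l$-th component of $H^T v(x)$ rather than $H v(x)$) and using the normalization $\frac{1}{\sqrt N}$ from the definition of $H$ consistently with the $\frac{1}{N}$ appearing in $m_B$ and $m_L$.
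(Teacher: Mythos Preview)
Your proof is correct and is essentially the same argument as the paper's, just packaged in matrix language: the paper expands $|m_B(\sigma_l(x))|^2$ as a double sum over $b,b'\in B$, interchanges the order of summation, and uses the orthogonality relation $\frac{1}{N}\sum_{l\in L}e^{2\pi i R^{-1}l\cdot(b-b')}=\delta_{b,b'}$, which is exactly the column-orthonormality of $H$ that you invoke via $\|H^T v(x)\|=\|v(x)\|$. The symmetry reduction for part (ii) is also identical.
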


\begin{proof}
By symmetry it is enough to prove (i). We have
$$\sum_{l\in L}|m_B(\sigma_l(x))|^2=\frac{1}{N^2}\sum_{l\in L}\sum_{b,b'\in B}e^{2\pi i R^{-1}(x+l)\cdot (b-b')}=$$$$
\sum_{b,b'\in B}e^{2\pi iR^{-1}x\cdot(b-b')}\frac{1}{N^2}\sum_{l\in L}e^{2\pi i R^{-1}l\cdot (b-b')}=\sum_{b=b'}\frac{1}{N^2}\cdot N=1.$$
\end{proof}

\begin{theorem}\label{thdj}\cite{DJ06}
Let $R,B,L$ be as above, and assume $(B,L)$ form a Hadamard pair.
Let $\Lambda$ be the smallest set with the following properties:
\begin{enumerate}
	\item $R\Lambda+L\subset \Lambda$;
	\item $\Lambda$ contains $-C$ for all extreme $L$-cycles $C$.
\end{enumerate}
Then the measure $\mu$ is spectral with spectrum $\Lambda$.
\end{theorem}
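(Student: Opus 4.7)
The plan is to build $\Lambda$ as an increasing union, prove orthogonality of $E(\Lambda)$ in $L^2(\mu)$ directly from the Hadamard condition, and then establish completeness via a Parseval-type identity analyzed by a Ruelle transfer operator. Throughout I would use the product formula $\hat\mu(t)=\prod_{k=1}^\infty \overline{m_B(R^{-k}t)}$, which follows by iterating the invariance equation \eqref{eq1.5}, together with the inner product identity $\langle e_{\lambda'},e_\lambda\rangle_{L^2(\mu)}=\hat\mu(\lambda-\lambda')$.

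First, set $\Lambda_0:=\bigcup\{-C : C\text{ an extreme }L\text{-cycle}\}$ and define $\Lambda_{n+1}:=R\Lambda_n+L$; then $\Lambda=\bigcup_{n\ge 0}\Lambda_n$ is the smallest set satisfying \textup{(i)} and \textup{(ii)}, and one has the decomposition $\Lambda=\Lambda_0\cup(R\Lambda+L)$. For orthogonality I would induct on $n$ to show $E(\Lambda_n)$ is orthonormal. In the inductive step write any $\lambda,\lambda'\in\Lambda_n$ as $\lambda=R\lambda_1+l$ and $\lambda'=R\lambda_1'+l'$ with $\lambda_1,\lambda_1'\in\Lambda_{n-1}$ and $l,l'\in L$. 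When $l=l'$ one pulls out the common factor in $\hat\mu(\lambda-\lambda')$ coming from the first level of the product and reduces to $\hat\mu(\lambda_1-\lambda_1')=0$, which holds by induction; when $l\ne l'$ the Hadamard condition \eqref{eq1.8} together with the $\bz$-periodicity of $m_B$ forces a vanishing factor at the appropriate depth. The base case $n=0$ exploits that $|m_B(c)|=1$ at every extreme cycle point $c$, so that the row-orthogonality of the Hadamard matrix \eqref{eq1.8} delivers the cancellation.

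For completeness it suffices to show $Q_\Lambda(x):=\sum_{\lambda\in\Lambda}|\hat\mu(x+\lambda)|^2=1$ for $\mu$-a.e.\ $x$: Bessel's inequality from orthogonality gives $Q_\Lambda\le 1$, and the reverse bound is the Parseval identity, which upgrades the orthonormal system $E(\Lambda)$ to an orthonormal basis. Using the recursion $\Lambda=\Lambda_0\cup(R\Lambda+L)$ and the factorization $|\hat\mu(Rt)|^2=|m_B(t)|^2|\hat\mu(t)|^2$, a direct computation shows that $h:=1-Q_\Lambda$ is a nonnegative function satisfying the Ruelle-operator fixed-point equation
\[
h(x)=\sum_{l\in L}|m_B(\sigma_l(x))|^2\, h(\sigma_l(x))=:(\mathcal{R}h)(x),
\]
while Proposition \ref{pr1.1} confirms the consistency $\mathcal{R}1=1$.

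The principal obstacle is then to show $h\equiv 0$ on the attractor $X_B$. The idea is that any bounded nonnegative $\mathcal{R}$-fixed point attains its supremum on an $\mathcal{R}$-invariant set, and by the QMF identity \eqref{eqqmfb} together with $|m_B|\le 1$ this set must be supported on extreme $L$-cycles. The minimality condition \textup{(ii)}, which forces $-C\subset\Lambda$ for every extreme cycle $C$, contributes precisely the right mass in $Q_\Lambda$ at those cycle points to annihilate $h$ there, after which $\mathcal{R}$-invariance propagates $h=0$ throughout $X_B$. Carrying out this spectral/dynamical analysis of $\mathcal{R}$, and precisely matching the cycle contributions against the seed $\Lambda_0$, is where the real technical work of \cite{DJ06} lies.
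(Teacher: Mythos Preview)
The paper does not prove this theorem at all; it is quoted from \cite{DJ06} and used as a black box. So there is no ``paper's own proof'' to compare against beyond the citation.

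Your outline is essentially the strategy of \cite{DJ06}: build $\Lambda$ as $\bigcup_n\Lambda_n$, prove orthogonality of $E(\Lambda)$ from the Hadamard condition via the product formula for $\hat\mu$, and then establish completeness by showing that $h:=1-Q_\Lambda$ is a nonnegative fixed point of the Ruelle operator with weight $|m_B|^2$ whose supremum is forced onto the extreme $L$-cycles, where the seed $\Lambda_0$ kills it. That is the right architecture, and the identification of condition \textup{(ii)} as exactly what is needed to make $h$ vanish on those cycles is the key insight.

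Two points to tighten. First, a small slip: the transfer operator $\mathcal{R}h(x)=\sum_{l\in L}|m_B(\sigma_l x)|^2 h(\sigma_l x)$ is built from the maps $\sigma_l$, so the dynamics and the propagation of $h=0$ take place on (a neighborhood of) the attractor $X_L$ of $(\sigma_l)_{l\in L}$, not on $X_B$; your last paragraph conflates the two. In fact $Q_\Lambda$ is entire and one wants $Q_\Lambda\equiv 1$ on $\br$, or at least $Q_\Lambda(0)=1$, and the maximum-principle/invariant-set argument lives on the $L$-side. Second, the base case of orthogonality (that $E(\Lambda_0)$ is orthonormal) is not immediate from the row-orthogonality of the Hadamard matrix alone: distinct extreme cycle points $c,c'$ may lie on different cycles, and one must actually produce a vanishing factor $m_B(R^{-k}(c-c'))=0$ somewhere in the infinite product for $\hat\mu(c-c')$. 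This uses that extreme cycle points lie in $B^\perp$ (Proposition~\ref{pr1.6}), that $m_B$ is $B^\perp$-periodic, and a short pigeonhole/periodicity argument; it deserves a line of its own rather than a wave toward the Hadamard rows.
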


\begin{proposition}\label{pr1.6}
Under the assumptions \eqref{eqass} all extreme $L$-cycles are contained in the lattice $B^\perp$,
\begin{equation}
B^\perp:=\left\{ x\in \br : bx\in\bz\mbox{ for all }b\in B\right\}.
\label{eqbperp}
\end{equation}

Also $B^\perp=\frac{1}{d}\bz$ where $d$ is the greatest common divisor of the elements of $B$.
\end{proposition}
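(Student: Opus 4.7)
The plan is to reduce both statements to elementary observations about the function $m_B$ and about divisibility in $\bz$.

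First I would characterize the points where $|m_B|$ attains its maximum value $1$. Since $m_B(x) = \frac{1}{N}\sum_{b \in B} e^{2\pi i bx}$ is an average of $N$ unit complex numbers, the triangle inequality gives $|m_B(x)| \leq 1$, with equality if and only if all the numbers $e^{2\pi i bx}$ ($b \in B$) share a common argument. Because $0 \in B$, one of these numbers is $1$, so the equality case forces $e^{2\pi i bx} = 1$ for every $b \in B$, i.e.\ $bx \in \bz$ for every $b \in B$. Thus
\begin{equation*}
\{x \in \br : |m_B(x)| = 1\} = B^\perp.
\end{equation*}
Given an extreme $L$-cycle $\{x_0,\dots,x_{p-1}\}$, the defining condition $|m_B(x_i)|=1$ for every $i$ therefore places each $x_i$ inside $B^\perp$, proving the first claim.

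For the identification $B^\perp = \frac{1}{d}\bz$, I would prove both inclusions directly. For ``$\supseteq$'', note that $d \mid b$ for every $b \in B$ by definition of $d = \gcd(B)$, so if $x = k/d$ with $k \in \bz$ then $bx = (b/d)k \in \bz$, which gives $x \in B^\perp$. For ``$\subseteq$'', apply Bezout's identity to write $d = \sum_{b \in B} n_b b$ with integer coefficients $n_b$. Then for any $x \in B^\perp$ we have $dx = \sum_{b \in B} n_b (bx) \in \bz$, so $x \in \frac{1}{d}\bz$.

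There is no real obstacle here: the only content is the observation that $0 \in B$ anchors the phases in the equality case of the triangle inequality, together with the standard Bezout argument describing $B^\perp$ as a fractional lattice generated by the gcd.
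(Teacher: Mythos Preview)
Your argument is correct and follows essentially the same route as the paper: the triangle-inequality analysis of $|m_B(c)|=1$ anchored by $0\in B$ is exactly what the paper does, and your Bezout computation simply spells out what the paper dismisses as ``easy to check since $B$ is contained in $\bz$.''
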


\begin{proof}
 Take $c$ to be a point in an extreme $L$-cycle. Since $|m_B(c)|=1$, we have with the triangle inequality
$$N=\left|\sum_{b\in B}e^{2\pi i bc}\right|\leq N.$$
Therefore, we have equality in the triangle inequality so the terms in the sum differ by a positive multiplicative constant, and since $0\in B$, it follows that $e^{2\pi i bc}=1$ for all $b\in B$. Therefore $bc\in\bz$ for all $b\in B$. The last statement is easy to check since $B$ is contained in $\bz$.

\end{proof}

\begin{lemma}\label{lem1.7}
If $(B,L)$ form a Hadamard pair, then the elements of $L$ are incongruent $\mod RB^\perp$. In particular, they are incongruent $\mod R$. The elements of $B$ are incongruent $\mod R$.
\end{lemma}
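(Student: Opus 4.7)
The plan is to exploit the unitarity of the Hadamard matrix $H = \frac{1}{\sqrt N}(e^{2\pi i R^{-1} b\cdot l})_{b\in B, l\in L}$ in the standard way: unitarity means that both the columns and the rows of $H$ form orthonormal systems. Column orthogonality will give the statement about $L$, and row orthogonality (or simply the symmetry of the roles of $B$ and $L$) will give the statement about $B$.

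First I would fix $l, l' \in L$ with $l \neq l'$ and write out the column-orthogonality relation
\begin{equation*}
\sum_{b \in B} e^{2\pi i R^{-1} b \cdot (l-l')} = 0.
\end{equation*}
Now suppose for contradiction that $l \equiv l' \pmod{RB^\perp}$, i.e., $l - l' = R y$ with $y \in B^\perp$. Then $R^{-1} b \cdot (l - l') = b \cdot y \in \mathbb{Z}$ for every $b \in B$ (by the definition of $B^\perp$), so every exponential in the sum equals $1$. This forces the sum to equal $N > 0$, contradicting the orthogonality relation. Hence the elements of $L$ are incongruent mod $RB^\perp$.

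For the ``in particular'' clause, I would just observe that $B \subset \mathbb{Z}$ forces $\mathbb{Z} \subset B^\perp$, so $R\mathbb{Z} \subset RB^\perp$; therefore elements that are incongruent mod $RB^\perp$ are a fortiori incongruent mod $R$. For the final statement about $B$, I would run the same argument with rows instead of columns: for $b \neq b' \in B$, row orthogonality gives $\sum_{l \in L} e^{2\pi i R^{-1} l \cdot (b-b')} = 0$, and if $b \equiv b' \pmod R$ then each exponential equals $1$ because $L \subset \mathbb{Z}$, producing the same contradiction.

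There is essentially no obstacle here; the argument is a direct unwinding of the Hadamard condition, and the only point that requires a moment of care is making sure the containment $\mathbb{Z} \subset B^\perp$ (and symmetrically $\mathbb{Z} \subset L^\perp$) is invoked to pass from the $RB^\perp$ statement to the $R$ statement. Proposition \ref{pr1.6} makes the lattice structure of $B^\perp$ explicit, but it is not even needed for this lemma.
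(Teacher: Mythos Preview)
Your proof is correct and follows essentially the same approach as the paper: use the orthogonality relations from the Hadamard matrix to get $\sum_{b\in B} e^{2\pi i R^{-1}b(l-l')}=0$, observe that $l-l'\in RB^\perp$ would force every term to equal $1$, and then use $R\mathbb{Z}\subset RB^\perp$ (from $B\subset\mathbb{Z}$) plus the $B\leftrightarrow L$ symmetry. Your write-up is in fact a bit more careful than the paper's, which has a minor slip in stating ``$R^{-1}(l-l')$ cannot be in $RB^\perp$'' where it means $B^\perp$.
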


\begin{proof}
The Hadamard property implies that for $l\neq l'$ in $L$,
$$\sum_{b\in B}e^{R^{-1}(l-l')\cdot b}=0.$$
This implies that $R^{-1}(l-l')$ cannot be in $RB^\perp$. Since $RB^\perp$ contains $R\bz$, $l$ cannot be congruent to $l'$ $\mod R$. Similarly for $B$.
\end{proof}

\subsection{An algorithm for finding extreme cycles}\label{sec1.1}
Theorem \ref{thdj} shows that in order to find the spectrum of $\mu$, one needs to compute the extreme $L$-cycles. We describe an algorithm for this. First we begin with some properties of extreme $L$-cycles:
\begin{proposition}\label{pr1.7.1}
Let $d$ be the greatest common divisor for $B$.
\begin{enumerate}
	\item Every extreme $L$-cycle point $x_0$ is of the form $\frac{k}{d}$ where $k\in\bz$ and $\frac{\min L}{R-1}\leq \frac kd\leq \frac{\max L}{R-1}$.
	\item For any $k\in\bz$, $|m_B(k/d)|=1$.
	\item For a point $k/d$ with $k\in\bz$, there exists at most one $l\in L$ such that $\frac{1}{R}\left(\frac kd+l\right)\in\frac1d\bz$.
\end{enumerate}

\end{proposition}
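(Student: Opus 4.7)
For part (i), Proposition \ref{pr1.6} immediately places every extreme $L$-cycle point in $B^\perp = \frac{1}{d}\bz$, so any such $x_0$ has the form $k/d$ with $k \in \bz$. To obtain the range constraint I would unwind the cycle: if $x_0, x_1, \dots, x_{p-1}$ is the cycle with $\sigma_{l_j}(x_j) = x_{(j+1) \bmod p}$, a short induction on $j$ shows
\[
x_j = R^{-j} x_0 + \sum_{i=0}^{j-1} R^{i-j} l_i,
\]
and the closure condition $x_p = x_0$ rearranges to
\[
x_0 = \frac{l_0 + R l_1 + \cdots + R^{p-1} l_{p-1}}{R^p - 1}.
\]
Since $1 + R + \cdots + R^{p-1} = (R^p-1)/(R-1)$, bounding each $l_j$ between $\min L$ and $\max L$ yields $\frac{\min L}{R-1} \leq x_0 \leq \frac{\max L}{R-1}$, which is the asserted range.

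Part (ii) is immediate from the gcd description of $B^\perp$: since $d$ divides every $b \in B$, we have $bk/d \in \bz$ for each $b \in B$ and $k \in \bz$, so every term in the sum defining $m_B(k/d)$ equals $1$; hence $m_B(k/d) = 1$ and in particular $|m_B(k/d)| = 1$.

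For part (iii) I would argue by contradiction. Suppose distinct $l, l' \in L$ both satisfy $\frac{1}{R}(k/d + l) \in \frac{1}{d}\bz$ and $\frac{1}{R}(k/d + l') \in \frac{1}{d}\bz$. Multiplying through by $Rd$, both $k + ld$ and $k + l'd$ lie in $R\bz$, so their difference $(l - l')d$ does too. Since $RB^\perp = \frac{R}{d}\bz$, the condition $R \mid (l-l')d$ is equivalent to $l - l' \in RB^\perp$, which contradicts Lemma \ref{lem1.7}. The only subtle point, and the main obstacle in the whole argument, is this final translation of a divisibility relation into the $RB^\perp$ language so that Lemma \ref{lem1.7} applies cleanly; everything else is routine arithmetic and bookkeeping.
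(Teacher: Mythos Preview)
Your proof is correct and follows essentially the same route as the paper. For (i) the paper observes more tersely that any $L$-cycle lies in the attractor $X_L=\{\sum_{n\geq1}R^{-n}l_n:l_n\in L\}$, which immediately gives the bounds $\min L/(R-1)$ and $\max L/(R-1)$ via the geometric series; your explicit closed formula for $x_0$ amounts to the same computation written out for a periodic sequence of digits. For (iii) the paper subtracts to get $R^{-1}(l-l')\in\frac1d\bz$ and then argues directly that $e^{2\pi iR^{-1}(l-l')b}=1$ for all $b\in B$, contradicting the Hadamard property, whereas you package the same contradiction through Lemma~\ref{lem1.7}; these are the same argument in different clothing.
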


\begin{proof}
The first statement in (i) follows from Proposition \ref{pr1.6}. Any $L$-cycle is contained in the attractor
$$X_L=\left\{\sum_{n=1}^\infty R^{-n}l_n : l_n\in L\right\}.$$
Therefore any point in $x$ is between $\min L\cdot\sum_{n=1}^\infty R^{-n}$ and $\max L\cdot\sum_{n=1}^\infty R^{-n}$.

(ii) is trivial.

For (iii), suppose there exist $l\neq l'$ in $L$ such that $R^{-1}(k/d+l)$ and $R^{-1}(k/d+l')$ are both in $\bz/d$. Then $R^{-1}(l-l')\in\bz/d$. But then $R^{-1}(l-l')\cdot b\in\bz$ for all $b\in B$ so $e^{2\pi i R^{-1}(l-l')\cdot b}=1$ for all $b\in B$, and this contradicts the Hadamard property.
\end{proof}

With Proposition \ref{pr1.7.1}, we see that there are only finitely many numbers that we have to check if they are extreme $L$-cycles, namely points of the form $k/d$ between $\min L/(R-1)$ and $\max L/(R-1)$.

Take such a point $k_0/d$. There exists at most one $l\in L$ such that $R^{-1}(k_0/d + l)=k_1/d$ for some $k_1/d$. If there is none, then stop, the point $k/d$ is not an extreme $L$-cycle. If there is one, then $k_1/d$ will be bounded by $\min L/(R-1)$ and $\max L/(R-1)$. Repeat this step with $k_1/d$.

After finitely many steps, we either stop, and then all the point $k_i/d$ obtained in this process are not extreme $L$-cycles, or we return to one of the points $k_i/d$ and then we get an extreme $L$-cycle.

\subsection{Non-overlap and encoding into symbolic spaces}

\begin{theorem}\label{thno}\cite{Alr10}
Let $R$ be an integer, $|R|>1$ and let $\D$ be a set of integers such that no two distinct elements of $\D$ are congruent modulo $R$. Consider the IFS $\tau_d(x)=R^{-1}(x+d)$, $d\in \D$ and let $X(\D)$ be its attractor and $D:=\log_R(\#\D)$. Then the Hausdorff measure of $X(\D)$ satisfies $0<\H^D(X(\D))<\infty$, the invariant measure $\mu_{\D}$ of the IFS $(\tau_d)_{d\in\D}$ is the renormalized Hausdorff measure $\H^D$ restricted to $X(\D)$ and the measure $\mu_{\D}$ has no overlap.
\end{theorem}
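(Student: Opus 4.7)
The plan is to follow the classical route for self-similar sets: verify the Open Set Condition (OSC) for the IFS $(\tau_d)_{d\in\D}$, invoke Moran--Hutchinson theory to pin down the Hausdorff measure, and then identify the invariant measure with the renormalized Hausdorff measure, deducing non-overlap as a by-product. To verify OSC, I would use the incongruence of $\D$ modulo $R$ to embed $\D$ into a complete residue system $\D^{\ast}$ modulo $R$ with $\#\D^{\ast}=|R|$. By the classical theory of integer self-affine tiles (Lagarias--Wang), the attractor of the enlarged IFS is a self-similar tile of $\br$ under $\bz$-translations, and the interior $U$ of a suitable fundamental domain is a bounded open set with $\tau_d(U)$ pairwise disjoint and contained in $U$ for every $d\in\D^{\ast}$. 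Restricting to $d\in\D\subseteq\D^{\ast}$, the same $U$ certifies OSC for the original IFS.

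Once OSC is in hand, Moran's theorem applied to the $N$ contractive similarities of common ratio $|R|^{-1}$ yields $\dim_H X(\D)=\log_{|R|}N=D$ together with $0<\H^D(X(\D))<\infty$.

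For the identification of measures, I would combine the scaling property $\H^D(\tau_d(E))=|R|^{-D}\H^D(E)=N^{-1}\H^D(E)$ with countable sub-additivity:
\begin{equation*}
\H^D(X(\D))=\H^D\Bigl(\bigcup_{d\in\D}\tau_d(X(\D))\Bigr)\le \sum_{d\in\D}\H^D(\tau_d(X(\D)))=\H^D(X(\D)).
\end{equation*}
Since $0<\H^D(X(\D))<\infty$, equality forces $\H^D\bigl(\tau_d(X(\D))\cap\tau_{d'}(X(\D))\bigr)=0$ for all $d\neq d'$ in $\D$. It follows that $\nu:=\H^D|_{X(\D)}/\H^D(X(\D))$ is a Borel probability measure satisfying the invariance equation \eqref{eq1.4}, and by uniqueness of the invariant probability measure $\mu_\D=\nu$. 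The no-overlap property for $\mu_\D$ is then immediate.

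The main obstacle is the OSC verification: the passage to a complete residue system is conceptually clean, but producing the open set $U$ rigorously depends on non-trivial integer tile theory. A naive choice such as the interior of $\mathrm{conv}(X(\D))$ fails in general, since when $\max\D-\min\D>|R|-1$ the intervals $\tau_d(\mathrm{conv}(X(\D)))$ genuinely overlap. If one prefers to avoid tile theory, a symbolic alternative is to work with the coding map $\pi\colon\D^{\N}\to X(\D)$, $\pi((d_k))=\sum_k R^{-k}d_k$: using the incongruence hypothesis one shows that the set of sequences with a distinct alternative preimage is Bernoulli-null, which pushes forward to non-overlap for $\mu_\D$ directly, after which the two-sided Hausdorff measure bound can be recovered from natural $|R|^{-n}$-covers (upper bound) and a mass distribution argument against $\mu_\D$ (lower bound).
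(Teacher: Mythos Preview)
Your proposal is correct and follows essentially the same route as the paper: extend $\D$ to a complete residue system, use the fact (cited in the paper as \cite{Ba91}, in your version as Lagarias--Wang) that the enlarged attractor has non-empty interior to serve as the OSC open set, invoke Hutchinson/Moran to get $0<\H^D(X(\D))<\infty$, and then deduce non-overlap and the identification $\mu_\D=\H^D|_{X(\D)}/\H^D(X(\D))$ from the equality case of subadditivity exactly as you wrote. The only cosmetic difference is that the paper takes $U=\operatorname{int}(X(\tilde\D))$ explicitly rather than speaking of a ``suitable fundamental domain,'' and it does not discuss the symbolic alternative you sketch at the end.
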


\begin{proof}
\newcommand{\inter}{\operatorname*{int}}
Since the elements of $\D$ are incongruent modulo $R$ we can enlarge it to a set $\tilde\D\supset\D$ which is a complete set of representatives for $\bz/R\bz$. We denote by $X(\tilde\D)$ the attractor of the IFS associated to $\tilde\D$.
By \cite[Theorem 1]{Ba91}, the attractor $X(\tilde \D)$ has non-empty interior $\inter(X(\tilde \D))\neq\ty$. Then
$$\cup_{d\in\D}\inter(\tau_d(X(\tilde\D)))\subset\cup_{d\in\tilde\D}\inter(\tau_d(X(\tilde \D)))\subset\inter\left(\cup_{d\in\tilde \D}\tau_d(X(\tilde\D))\right)=\inter(X(\tilde \D)).$$
This means that the Open Set Condition is satisfied for the IFS $(\tau_d)_{d\in\D}$. Using \cite[Theorem 5.3.1 (ii)]{Hut81}, we can conclude that $0<\H^D(X(\D))<\infty$.

For any Borel subset $E$ of $\br$ and $d\in D$, we have
$\H^D(\tau_d^{-1}(E))=\H^D(RE-d)=\H^D(RE)=R^D\H^D(E)=N\H^D(E)$. Similarly $\H^D(\tau_d(E))=\frac{1}{N}\H^D(E)$.

We have
$$\H^D(X(\D))=\H^D\left(\cup_{d\in\D}\tau_d(X(\D))\right)\leq\sum_{d\in\D}\H^D(\tau_d(X(\D)))=\frac{1}{N}\cdot N\H^D(X(\D))=\H^D(X(\D)).$$
Since we must have equality, this implies that $\H^D(\tau_d(X(\D))\cap\tau_{d'}(X(\D)))=0$ for distinct $d,d'\in\D$.

Then for any Borel set $E$:
$$\H^D(E\cap X(\D))=\sum_{d\in\D}\H^D(E\cap \tau_d(X(\D)))=\sum_{d\in\D}\frac{1}{N}\H^D(\tau_d^{-1}(E)\cap X(\D)).$$
But this proves that $\mu_{\D}$ is invariant for the IFS.
\end{proof}

\begin{proposition}\label{pr1.10}
Consider the symbolic space $B^{\bn}$ with the product probability measure $dP$ where each digit in $B$ has probability $1/N$. Define the encoding map:
$\mathcal E: B^{\bn}\rightarrow X_B$,
$$\mathcal E(b_1b_2\dots)=\sum_{n=1}^\infty R^{-n}b_n.$$
Then $\mathcal E$ is onto, it is one-to-one on a set of full measure, and it is measure preserving.

Define the maps for $b\in B$, $\breve b: B^{\bn}\rightarrow B^{\bn}$, $\breve b(b_1b_2\dots)=bb_1b_2\dots$.
Then
$$\mathcal E\circ\breve b=\tau_b\circ \mathcal E.$$
Define the map $\mathcal R:X_B\rightarrow X_B$,
$$\mathcal R\left(\frac{b_1}{R}+\frac{b_2}{R^2}+\dots\right)=\frac{b_2}{R}+\frac{b_3}{R^2}+\dots$$
and define the right shift on $B^{\bn}$,
$$\mathcal S(b_1b_2\dots)=b_2b_3\dots.$$
Then
$$\mathcal E\mathcal S=\mathcal R\mathcal E.$$

\end{proposition}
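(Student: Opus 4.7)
The plan is to dispatch the five assertions in order of difficulty, saving the real work for almost-everywhere injectivity. Surjectivity of $\mathcal E$ is immediate from the explicit description \eqref{eq1.3} of $X_B$, and the two intertwining identities reduce to a direct calculation: for any $b\in B$,
$$\mathcal E(bb_1b_2\cdots)=R^{-1}b+R^{-1}\sum_{n=1}^\infty R^{-n}b_n=\tau_b(\mathcal E(b_1b_2\cdots)),$$
which gives $\mathcal E\circ\breve b=\tau_b\circ\mathcal E$, and the shift identity $\mathcal E\circ\mathcal S=\mathcal R\circ\mathcal E$ is analogous.

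For the measure-preserving statement, I would introduce the pushforward $\nu:=\mathcal E_* P$ as a Borel probability measure on $X_B$ and verify that $\nu$ satisfies the invariance equation \eqref{eq1.4}; the uniqueness clause in Definition \ref{def1.1} then forces $\nu=\mu_B$. The verification combines the partition $B^{\bn}=\bigsqcup_{b\in B}\breve b(B^{\bn})$, the equality $P(\breve b(C))=\frac{1}{N}P(C)$ that is built into the product measure $dP$, and the intertwining identity $\mathcal E\circ\breve b=\tau_b\circ\mathcal E$ (used to rewrite $\mathcal E^{-1}(E)\cap \breve b(B^{\bn})$ as $\breve b(\mathcal E^{-1}(\tau_b^{-1}(E)))$), yielding
$$\nu(E)=P(\mathcal E^{-1}(E))=\sum_{b\in B}P\!\left(\breve b\!\left(\mathcal E^{-1}(\tau_b^{-1}(E))\right)\right)=\frac{1}{N}\sum_{b\in B}\nu(\tau_b^{-1}(E)).$$

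The substantive step is the almost-everywhere injectivity, where I would invoke the no-overlap property. By Lemma \ref{lem1.7} the digits in $B$ are pairwise incongruent modulo $R$, so Theorem \ref{thno} applies and gives $\mu_B(\tau_b(X_B)\cap\tau_{b'}(X_B))=0$ for all distinct $b,b'\in B$. Set
$$F:=\bigcup_{b\neq b'\in B}\bigl(\tau_b(X_B)\cap\tau_{b'}(X_B)\bigr),\qquad F_\infty:=\bigcup_{n=0}^\infty \mathcal R^{-n}(F).$$
If $(a_n)\neq(a_n')$ both encode the same point $x\in X_B$ and $n$ is the first index of disagreement, then applying $\mathcal R^{n-1}$ places $\mathcal R^{n-1}(x)$ in $\tau_{a_n}(X_B)\cap\tau_{a_n'}(X_B)\subset F$, so every point with non-unique coding lies in $F_\infty$. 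The main obstacle, and the last thing to check, is that $\mu_B(F_\infty)=0$: the no-overlap property upgrades the invariance \eqref{eq1.4} to $\mu_B(\tau_b(A))=\frac{1}{N}\mu_B(A)$ for every Borel $A\subset X_B$ (applying \eqref{eq1.4} to $E=\tau_b(A)$ and observing that only the $b'=b$ summand survives, the others being null by no-overlap), and iterating this identity gives $\mu_B(\mathcal R^{-n}(F))=0$ for every $n\geq 0$. The measure-preservation $\mathcal E_* P=\mu_B$ already established then yields $P(\mathcal E^{-1}(F_\infty))=0$, and the complement of $\mathcal E^{-1}(F_\infty)$ is the full-measure set on which $\mathcal E$ is injective.
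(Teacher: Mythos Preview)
Your argument is correct and supplies precisely the details that the paper omits: the paper's own proof consists of a single sentence, ``This is standard, see e.g.\ \cite{Edg08}, and it follows from the non-overlap property proved in Theorem \ref{thno},'' so there is no alternative approach to compare against---you have simply written out the standard argument the authors are gesturing at.

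One small point worth tightening: you invoke $\mathcal R^{n-1}(x)$ in the injectivity step before $\mathcal R$ is known to be well-defined, which looks circular at first glance. It is harmless here because if two codings of $x$ agree on their first $n-1$ digits then ``delete the first digit'' is unambiguous that many times regardless of any further codings (and if some third coding disagrees earlier, then $x$ already lies in $F_\infty$ for a smaller index). Equivalently, you can bypass $\mathcal R$ entirely and phrase the containment as
\[
\{x\in X_B:\mathcal E^{-1}(\{x\})\text{ is not a singleton}\}\ \subset\ \bigcup_{n\ge 0}\ \bigcup_{\omega\in B^{n}}\tau_\omega(F),
\]
and then use the identity $\mu_B(\tau_b(A))=\tfrac{1}{N}\mu_B(A)$ you already derived to kill each $\tau_\omega(F)$. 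Either way the argument goes through.
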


\begin{proof}
This is standard, see e.g. \cite{Edg08}, and it follows from the non-overlap property proved in Theorem \ref{thno}.
\end{proof}

\section{The Dirichlet kernel}

Next, we define the sets $\Lambda_n$ inductively, starting with extreme $L$-cycles and then scaling by $R$ and adding $L$. The Dirichlet kernel is obtained by summing the exponential functions over the elements in $\Lambda_n$.
\begin{definition}\label{def1.7}
Define the sets $\Lambda_n$ inductively as follows
\begin{equation}
\Lambda_0:=\bigcup\left\{ -C : C\mbox{ is an extreme $L$-cycle }\right\},\quad \Lambda_{n+1}:=R\Lambda_n+L,\quad(n\geq0),
\label{eq1.12}
\end{equation}
We let $\Lambda=\Lambda(L)$ be the set defined in Theorem \ref{thdj}.

Define the {\it Dirichlet kernel}
\begin{equation}
D_n(x):=\sum_{\lambda\in\Lambda_n}e^{2\pi i \lambda x},\quad(x\in\br).
\label{eq1.13}
\end{equation}

Given a function $f\in L^1(\mu)$, we define the partial Fourier series
\begin{equation}
s_n(f;x)=\sum_{\lambda\in\Lambda_n}\left(\int f(y)e^{-2\pi i\lambda y}\,d\mu(y)\right)\cdot e^{2\pi i\lambda x}=\int f(y)D_n(x-y)\,d\mu(y),\quad(x\in\br).
\label{eqsn}
\end{equation}
\end{definition}

\begin{proposition}\label{pr1.8}

The sets $\Lambda_n$ satisfy the following properties:
\begin{enumerate}
	\item $\Lambda_n\subset\Lambda_{n+1}$ for all $n\in\bn$;
	\item $\cup_{n\in\bn}\Lambda_n=\Lambda$;
	\item $\Lambda_n,\Lambda\subset B^\perp$ for all $n\in\bn$.
\end{enumerate}
Let
\begin{equation}
m_c(x):=\sum_{\mbox{extreme $L$-cycle points $c$}}e^{2\pi i(-c)x},\quad(x\in \br).
\label{eq1.14}
\end{equation}

The Dirichlet kernel satisfies the formula
\begin{equation}
D_n(x)=N^nm_c(R^nx)\prod_{k=0}^{n-1}m_L(R^kx),\quad(x,y\in\br).
\label{eq1.15}
\end{equation}

\end{proposition}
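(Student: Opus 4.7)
The plan is to treat the four claims in the order listed, handling (i)--(iii) by induction on $n$ and then deriving the Dirichlet kernel formula by a second induction using the structural information just obtained.

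For (i) I would first check the base case $\Lambda_0 \subset \Lambda_1 = R\Lambda_0 + L$. Given an extreme $L$-cycle $\{c_0,\dots,c_{p-1}\}$ with $\sigma_{l_i}(c_i) = c_{i+1}$ (indices mod $p$), the relation $R c_{i+1} = c_i + l_i$ rearranges to $-c_i = R(-c_{i+1}) + l_i$, exhibiting each $-c_i\in\Lambda_0$ as an element of $R\Lambda_0+L$. The inductive step is then automatic: from $\Lambda_n\subset\Lambda_{n+1}$ one multiplies by $R$ and adds $L$. For (ii), the inclusion $\bigcup_n\Lambda_n\subset\Lambda$ follows by induction using both defining properties of $\Lambda$ in Theorem \ref{thdj}, while the reverse inclusion follows because $\bigcup_n\Lambda_n$ itself satisfies both properties and $\Lambda$ is the smallest such set. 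For (iii) I would use Proposition \ref{pr1.6}, which places $\Lambda_0$ inside $B^\perp=\tfrac1d\bz$; since $R\in\bz$ we have $R\cdot\tfrac1d\bz\subset\tfrac1d\bz$, and $L\subset\bz\subset B^\perp$, so the inclusion is preserved under $\lambda\mapsto R\lambda+l$, and an induction finishes the argument.

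For the product formula, I would induct on $n$. The base case $n=0$ is immediate: the empty product equals $1$, $N^0=1$, and $D_0(x)=\sum_{\lambda\in\Lambda_0}e^{2\pi i\lambda x}=m_c(x)$ by the definition of $m_c$. The key observation for the inductive step is that, because $\Lambda_n\subset B^\perp$ (by (iii)) and the elements of $L$ are pairwise incongruent modulo $RB^\perp$ (Lemma \ref{lem1.7}), the addition map
\[
(\lambda,l)\in\Lambda_n\times L\;\longmapsto\;R\lambda+l\in\Lambda_{n+1}
\]
is injective. Indeed, if $R\lambda+l = R\lambda'+l'$ then $l-l'=R(\lambda'-\lambda)\in RB^\perp$, forcing $l=l'$ by Lemma \ref{lem1.7}, and then $\lambda=\lambda'$. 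Consequently the sum defining $D_{n+1}$ factors:
\[
D_{n+1}(x)=\sum_{\lambda\in\Lambda_n}\sum_{l\in L}e^{2\pi i(R\lambda+l)x}=D_n(Rx)\cdot N m_L(x).
\]
Substituting the inductive hypothesis for $D_n(Rx)$ and shifting the product index completes the recursion.

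The only genuinely delicate point is the injectivity of the map $(\lambda,l)\mapsto R\lambda+l$, which is precisely where one must invoke Lemma \ref{lem1.7} and the lattice statement $\Lambda_n\subset B^\perp$ from (iii). Once this is established, the factorization of $D_{n+1}$ is a one-line computation and the whole proof reduces to two short inductions.
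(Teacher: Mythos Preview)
Your proof is correct and follows essentially the same approach as the paper: the paper is more terse on (i)--(iii), but the heart of the argument---the recursion $D_{n+1}(x)=Nm_L(x)D_n(Rx)$, justified via the injectivity of $(\lambda,l)\mapsto R\lambda+l$ coming from $\Lambda_n\subset B^\perp$ and Lemma~\ref{lem1.7}---is identical. The only cosmetic difference is that the paper deduces (iii) from (ii) and $\Lambda\subset B^\perp$, whereas you induct directly on $n$; both are immediate.
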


\begin{proof}
(i) follows from the definition of $L$-cycles. (ii) follows from the definition of $\Lambda$.  (iii) follows from Proposition \ref{pr1.6} and (ii).

To prove \eqref{eq1.15}, note that
$$D_0(x)=m_c(x),\quad(x\in\br).$$

We claim that
\begin{equation}
D_{n+1}(x)=Nm_L(x)D_n(Rx).
\label{eq1.16}
\end{equation}

We have $\Lambda_{n+1}=R\Lambda_n+L$. By (iii), the elements of $\Lambda_n$ are in $B^\perp$. Using Lemma \ref{lem1.7}, we see that a point $\lambda_{n+1}$ in $\Lambda_{n+1}$ will have a {\it unique} representation of the form $\lambda_{n+1}=R\lambda_n+l$ with $\lambda_n\in\Lambda_n$ and $l\in L$. This implies that
$$D_{n+1}(x)=\sum_{\lambda_n\in\Lambda_n}\sum_{l\in L}e^{2\pi i (R\lambda_n+l)\cdot x}
=\sum_{l\in L}e^{2\pi i l\cdot x}\sum_{\lambda_n\in \Lambda_n}e^{2\pi i R \lambda_n x}=
Nm_L(x)D_n(Rx).$$

Then \eqref{eq1.15} follows from \eqref{eq1.16} by induction.
\end{proof}

\begin{theorem}\label{th2.3}
Let
\begin{equation}
\Delta(Nm_L):=\exp\left(\int \log|Nm_L(x)|\,d\mu(x)\right).
\label{eq2.3.1}
\end{equation}
If $\Delta(Nm_L)>1$ then
\begin{enumerate}
	\item The $L^1$-norm of the Dirichlet kernel grows exponentially fast. More precisely, for any $1<\rho<\Delta(Nm_L)$ there exists a constant $C>0$ such that
	\begin{equation}\label{eq2.3.2}
\|D_n\|_1:=\int |D_n(x)|\,d\mu(x)\geq C\rho^n\mbox{ for $n$ large.}
\end{equation}
\item There exist continuous functions $f$ such that the Fourier series at zero $s_n(f;0)$  is unbounded.
\end{enumerate}
\end{theorem}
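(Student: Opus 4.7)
The plan is to combine the product formula \eqref{eq1.15} for $D_n$ with the pointwise ergodic theorem applied to the shift map $\mathcal R$ on $(X_B,\mu)$. A preliminary reduction is crucial: on $X_B$ the dilation $R$ agrees with $\mathcal R$ modulo $B$, since any $x\in X_B$ satisfies $Rx=\mathcal R(x)+b_1$ for some $b_1\in B$. Because $L\subset\bz$ the trigonometric polynomial $m_L$ is $\bz$-periodic, so $m_L(R^k x)=m_L(\mathcal R^k x)$ on $X_B$; because the extreme $L$-cycle points lie in $B^\perp=\tfrac1d\bz$ with $d=\gcd B$ and $B\subset d\bz$, $m_c$ is $d$-periodic and the same identity holds for $m_c$. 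By Proposition \ref{pr1.10} the shift $\mathcal R$ is measure-theoretically conjugate to the one-sided Bernoulli shift on $(B^{\bn},dP)$, so it preserves $\mu$ and is ergodic.

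Fix $\rho$ with $1<\rho<\Delta(Nm_L)$. The function $\log|Nm_L|$ is $\mu$-integrable, since it is bounded above by $\log N$ and by hypothesis $\int\log|Nm_L|\,d\mu=\log\Delta(Nm_L)$ is finite. Birkhoff's theorem then yields
\[
\frac1n\sum_{k=0}^{n-1}\log|Nm_L(\mathcal R^k x)|\longrightarrow \log\Delta(Nm_L)\quad\mu\text{-a.e.}
\]
Setting $A_n:=\{x\in X_B:\prod_{k=0}^{n-1}|Nm_L(\mathcal R^k x)|\geq\rho^n\}$, this convergence implies $\mathbf 1_{A_n}\to 1$ a.e., hence $\mu(A_n)\to 1$ by dominated convergence. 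Now $m_c$ is a nonzero continuous function on $X_B$ (indeed $m_c(0)\geq 1$, because $\{0\}$ is always an extreme $L$-cycle via $l=0\in L$ and $|m_B(0)|=1$), so one can pick $\epsilon>0$ with $F_\epsilon:=\{y\in X_B:|m_c(y)|\geq\epsilon\}$ satisfying $\mu(F_\epsilon)>0$. The $\mathcal R$-invariance of $\mu$ gives $\mu(\mathcal R^{-n}F_\epsilon)=\mu(F_\epsilon)$, and for $n$ large $\mu(A_n\cap\mathcal R^{-n}F_\epsilon)\geq \mu(F_\epsilon)/2$. On this intersection \eqref{eq1.15} forces $|D_n(x)|\geq\epsilon\rho^n$, whence $\|D_n\|_1\geq \tfrac{\epsilon\mu(F_\epsilon)}{2}\rho^n$, which is \eqref{eq2.3.2}.

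For part (ii), the partial-sum-at-zero functional $T_n(f):=s_n(f;0)=\int f(y)D_n(-y)\,d\mu(y)$ is a bounded linear functional on $C(X_B)$. Since $D_n(-y)=\overline{D_n(y)}$, its operator norm is $\|T_n\|=\int|D_n(-y)|\,d\mu(y)=\|D_n\|_1$, which tends to infinity by part (i). The Banach--Steinhaus uniform boundedness principle then supplies $f\in C(X_B)$ with $\sup_n|s_n(f;0)|=\infty$. The main obstacle to be negotiated is avoiding any assumption on the $\mu$-integrability of $\log|m_c|$ — that is precisely why \eqref{eq2.3.2} is formulated with an arbitrary $\rho<\Delta(Nm_L)$ rather than with $\rho=\Delta(Nm_L)$, and why we control $m_c$ only through a set of positive measure on which it is bounded away from zero.
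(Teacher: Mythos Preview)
Your proof is correct and follows essentially the same route as the paper: use the product formula \eqref{eq1.15}, reduce $R^k$ to $\mathcal R^k$ via periodicity, apply Birkhoff's ergodic theorem for the shift $\mathcal R$, control $m_c$ on a set of large measure, and finish part (ii) with Banach--Steinhaus. The only noteworthy difference is tactical: the paper invokes Egoroff's theorem to get a \emph{fixed} set $A$ of measure $>5/6$ on which the ergodic averages exceed $\log\rho$, and then intersects with a set $E$ of measure $>5/6$ where $|m_c|\geq\epsilon$, using inclusion--exclusion. You instead let the good set $A_n$ vary with $n$ and observe $\mu(A_n)\to 1$ directly from pointwise convergence, which is slightly cleaner and avoids Egoroff. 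You are also more explicit than the paper about why $m_c(R^n x)=m_c(\mathcal R^n x)$ (via $d$-periodicity of $m_c$ and $R^n x-\mathcal R^n x\in d\bz$); the paper writes $\chi_E(R^n x)=\chi_{\mathcal R^{-n}E}(x)$ without spelling this out.
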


\begin{proof}
Consider the map $\mathcal R$ on the attractor $X_B$, defined in Proposition \ref{pr1.10}. Since $m_L$ is $\bz$-periodic
we have that $|m_L(R^kx)|=|m_L(\mathcal R^kx)|$ for all $x\in X_B$ and all $k\in\bn$. Also, we have that $\mathcal R(\tau_b(x))=x$ for all $x\in X_B$ and all $b\in B$.

The invariance equation for $\mu$ shows that for all $f\in C(X_B)$,
$$\int f\circ \mathcal R\,d\mu=\frac1N\sum_{b\in B}\int f\circ\mathcal R\circ \tau_b\,d\mu=\int f\,d\mu.$$

The map $\mathcal R$ is equivalent to the unilateral shift on the symbolic space, as we described in Proposition \ref{pr1.10}. It is well known that the shift is ergodic, therefore the map $\mathcal R$ is ergodic.

By Birkhoff's ergodic theorem, we have that for $\mu$-a.e. $x$ in $X_B$
$$\lim_{n\rightarrow\infty}\frac{1}{n}\sum_{k=0}^{n-1}\log|Nm_L(R^kx)|=\log\Delta(Nm_L).$$

By Egoroff's theorem there exists a subset $A$ of measure $\mu(A)>5/6$ such that the limit above is uniform on $A$.
Take $1<\rho<\Delta(Nm_L)$. There exists $n_\rho$ such that for $n\geq n_\rho$:
$$\frac{1}{n}\sum_{k=0}^{n-1}\log|Nm_L(R^kx)|>\log\rho,\quad(x\in A).$$
Then, for $x\in A$:
$$N^n\prod_{k=0}^{n-1}|m_L(R^kx)|\geq \rho^n.$$

Now take a subset $E$ of $X_B$ and some $\epsilon>0$ such that $|m_c(x)|\geq \epsilon $ for $x\in E$ and $\mu(E)\geq 5/6$. This can be done since $m_c$ is a trigonometric polynomial so it has only finitely many zeros. Then $|m_c|\geq \epsilon \chi_E$.

With these inequalities and Proposition \ref{pr1.8}, we obtain that

$$\int |D_n(x)|\,d\mu(x)\geq \epsilon \rho^n\int \chi_A(x)\chi_E(R^nx)\,d\mu(x)=\epsilon\rho^n\int \chi_A(x)\chi_{\mathcal R^{-n}E}(x)\,d\mu(x).$$

But since the measure $\mu$ is invariant for $\mathcal R$ it follows that $\mu(\mathcal R^{-n}E)=\mu(E)$. Then
$$\mu(A\cap \mathcal R^{-n}E)=\mu(A)+\mu(\mathcal R^{-n}E)-\mu(A\cup \mathcal R^{-n}E)\geq 5/6+5/6-1=2/3.$$

Finally, this means that for $n\geq n_\rho$:
$$\int|D_n(x)|\,d\mu(x)\geq \frac23\epsilon \rho^n.$$

To prove (ii) we use the same argument as in the classical case, see e.g. \cite[Chapter 5]{Rud87}.

Define the linear functionals $\varphi_n:C(X_B)\rightarrow\bc$, $\varphi_n(x)=s_n(f;0)$.
Then
$$\varphi_n(f)=\int f(x)\cj{D_n(x)}\,d\mu(x).$$
We then see that $\|\varphi_n\|\leq \|D_n\|_1$. On the other hand put $g(x)=|D_n(x)|/\cj{D_n(x)}$ if $|D_n(x)|\neq 0$, $g(x)=1$ otherwise (Note that $D_n(x)$ has finitely many zeros). There exist $f_j\in C(X_B)$ such that $|f_j|\leq 1$ such that $f_j$ converges to $g$ pointwise. By the dominated convergence theorem
$$\lim_{j\rightarrow\infty}\varphi_n(f_j)=\int |D_n(x)|\,d\mu(x)=\|D_n\|_1.$$

This shows that
\begin{equation}
\|\varphi_n\|=\|D_n\|_1.
\label{eq2.3.3}
\end{equation}
Then the hypothesis implies that $\lim_n\|\varphi_n\|=\infty$. By the Banach-Steinhaus uniform boundedness principle, there exists a function $f\in C(X_B)$ such that $s_n(f;0)=\varphi_n(f)$ is unbounded.
\end{proof}

Given  the spectrum set $\Lambda:=\cup_{n\ge 0}\Lambda_n$  associated with two subsets $B$ and $L$ having the same cardinality $R\ge 2$ (c.f. Examples \ref{ex3.7} and \ref{ex3.8}),
  there is a continuous function $f$ with spectrum $\Lambda$ by Theorem \ref{th2.3} such that 
 its Fourier series diverges at the origin, provided that $\Delta(Nm_L)$ in \eqref{eq2.3.1} is strictly larger than one.
The  pointwise convergence of Fourier series with given spectrum, such as polynomial spectrum, is related to many mathematical branches, 
such as number theory, and its investigation dates back to Gauss and Weyl \cite{ArkOs89, Vino85}.

\begin{proposition}\label{pr2.4}
We have the following bound on the number $\Delta(Nm_L)$ in Theorem \ref{th2.3}:
\begin{equation}
\Delta(Nm_L)<\sqrt{N}.
\label{eq2.4.1}
\end{equation}
\end{proposition}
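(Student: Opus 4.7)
The plan is to apply Jensen's inequality to pass from the geometric-mean quantity $\Delta(Nm_L)$ to an $L^2$-integral which can be evaluated exactly, using both the invariance equation for $\mu$ and the quadrature identity \eqref{eqqmfl} from Proposition \ref{pr1.1}(ii).

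Since $\mu$ is a probability measure and $\log$ is concave, Jensen's inequality gives
$$\Delta(Nm_L)^2=\exp\left(\int \log |Nm_L(x)|^2\,d\mu(x)\right)\leq \int |Nm_L(x)|^2\,d\mu(x).$$
Applying the invariance equation \eqref{eq1.5} to $f=|m_L|^2$, then swapping sum and integral, and finally using \eqref{eqqmfl}, one obtains
$$\int |m_L|^2\,d\mu=\frac{1}{N}\sum_{b\in B}\int|m_L\circ\tau_b|^2\,d\mu=\frac{1}{N}\int\sum_{b\in B}|m_L(\tau_b(x))|^2\,d\mu(x)=\frac{1}{N}.$$
Multiplying by $N^2$ gives $\int |Nm_L|^2\,d\mu=N$, so $\Delta(Nm_L)\leq \sqrt{N}$.

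The delicate point is promoting this non-strict bound to a strict one, and this is the step I expect to require the most care. Equality in Jensen forces $|Nm_L|^2$ to be constant $\mu$-almost everywhere, necessarily equal to $N$. Because $Nm_L$ is a trigonometric polynomial, $|Nm_L|^2$ is continuous; and because $\mu$ has full support on the attractor $X_B$ (every cylinder $\tau_{b_1}\circ\cdots\circ\tau_{b_n}(X_B)$ has positive measure, so each point of $X_B$ lies in the support of $\mu$), the a.e.\ equality would propagate to the pointwise identity $|Nm_L(x)|^2=N$ for every $x\in X_B$. However, $0\in B$ implies $0\in X_B$, and at $x=0$ every term in $\sum_{l\in L} e^{2\pi i l x}$ equals $1$, so $|Nm_L(0)|^2=N^2$. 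This forces $N^2=N$, i.e., $N=1$, a case excluded by the standing hypothesis $N=\#B=\#L\geq 2$ (needed for a nontrivial Hadamard pair). This contradiction yields the strict inequality $\Delta(Nm_L)<\sqrt{N}$.
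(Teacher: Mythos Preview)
Your proof is correct and follows essentially the same route as the paper: compute $\int |m_L|^2\,d\mu=1/N$ via the invariance equation \eqref{eq1.5} and the identity \eqref{eqqmfl}, then apply Jensen's inequality. The paper dispatches strictness in one line (``$|Nm_L|$ is not constant a.e.''), whereas you supply the supporting argument explicitly; your use of $0\in X_B$ and $|Nm_L(0)|=N$ is a clean way to do this.
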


\begin{proof}
We have, by the invariance equation and Proposition \ref{pr1.1}:
$$\int |m_L|^2\,d\mu=\frac1N\sum_{b\in B}\int |m_L(\tau_bx)|^2\,d\mu=\frac1N.$$
Then, using Jensen's inequality:
$$\int \log|Nm_L|\,d\mu=\frac12\int\log|Nm_L|^2\,d\mu< \frac12\log\left(\int|Nm_L|^2\,d\mu\right)=\frac12\log(N)=\log\sqrt{N}.$$
The inequality is strict because $|Nm_L|$ is not constant a.e.
\end{proof}

\begin{example}\label{ex2.5}
Consider the Jorgensen-Pedersen example \cite{JoPe98}, $R=4$, $B=\{0,2\}$. One can pick $L=\{0,p\}$ for any odd number $p$. In \cite{JoPe98}, Jorgensen and Pederesen picked $L=\{0,1\}$. In \cite{MR2279556} Strichartz proved that for this choice $L=\{0,1\}$, the Dirichlet kernel is an approximate identity convolution kernel so Fourier series of continuous functions converge uniformly to the function, and the $L^1$-norm of the Dirichlet kernel is bounded.

 We have for $L=\{0,p\}$:
 $$2|m_L(x)|=|1+e^{2\pi i px}|=2|\cos(p\pi x)|.$$

We list below the numerical approximation for $\Delta(2m_L)$, and the non-trivial extreme $L$-cycles ($\{0\}$ is the trivial extreme $L$-cycle).
\begin{center}
\begin{tabular}{|c|c|c|}
\hline
$L$ & $\Delta(2m_L)$ & Non-trivial extreme $L$-cycles\\
\hline
$\{0,1\}$& 0.645591 & none\\ \hline
$\{0,3\}$&0.572511 & $\{1\}$\\ \hline
$\{0,5\}$&0.977119 & none\\ \hline
$\{0,7\}$ &0.92037 & none\\ \hline
$\{0,9\}$ &0.799365 &$\{3\}$\\ \hline
$\{0,11\}$ &0.876405 & none \\ \hline
$\{0,13\}$ &0.857008 &  none \\ \hline
$\{0,15\}$ &0.596433 & $\{1,4\}$, $\{5\}$\\ \hline
$\{0,17\}$ &1.04887 & none\\ \hline
$\{0,19\}$ &0.879154 & none\\ \hline
$\{0,21\}$ &0.967384 &$\{7\}$\\ \hline
$\{0,23\}$ &1.0163 & none\\ \hline
$\{0,25\}$ &0.943818 & none\\ \hline
$\{0,27\}$ &0.921974 & $\{9\}$\\ \hline
$\{0,29\}$ &1.00966 &  none\\ \hline
\end{tabular}
\end{center}

Note that for $p=17,23$ and $29$ we get $\Delta(2m_L)>1$, which implies that the corresponding Dirichlet kernels grow exponentially fast in $L^1$-norm and the Fourier series diverge for some continuous functions.

The numerical approximations in the table above were computed using an algorithm based on Elton's theorem for iterated function systems \cite{Elt87}. We use Elton's theorem because we integrate with respect to the fractal measure $\mu$, not the Lebesgue measure. Elton's ergodic theorem asserts that for a contractive iterated function system $\{\tau_i\}_{i=1}^n$ with invariant measure $\mu$ and for every continuous function $f$ on the attractor, the averages
$$\frac{1}{N}\sum_{j=0}^{N-1}f\circ\tau_{i_{j}}\circ\dots\circ\tau_{i_0}x$$
converge to $\int f\,d\mu$ for almost every random choice of digits $i_0,i_1,\dots,$ and every $x$ in the attractor. 

Another way to approximate the integral $\int f\,d\mu$ is by $\frac{1}{|B|^n}\sum_{b_1,\dots,b_n\in B}f\circ\tau_{b_n}\circ\dots\circ\tau_{b_1}x$, for some $x\in X_B$.
%

The extreme $L$-cycles can be computed using the algorithm in Section \ref{sec1.1}.
\end{example}

\subsection{Ruelle operators}

\begin{definition}\label{def3.1}
For functions $f$ defined on $X_B$, we define the {\it Ruelle operator} $R_L$ associated to the function $|m_L|$:
\begin{equation}
(R_Lf)(x)=\sum_{b\in B}|m_L(\tau_bx)|f(\tau_bx),\quad(x\in X_B).
\label{eq3.1.1}
\end{equation}

\end{definition}

\begin{theorem}\label{th3.1}
Define
\begin{equation}
\mathfrak D:=\lim_{n\rightarrow\infty}(\sup_{x\in X_B} (R_L^n1)(x))^{\frac1n}.
\label{eq3.1.2}
\end{equation}
If $\mathfrak D>1$ then $\|D_n\|_1$ grows exponentially fast. More precisely, for each $1<\rho\leq \mathfrak D$ there exists a constant $C>0$ such that
\begin{equation}
\|D_n\|_1\geq C\rho^n,\mbox{ for $n$ large.}
\label{eq3.1.3}
\end{equation}
In particular, there exist continuous functions $f\in C(X_B)$ such that $s_n(f;0)$ is unbounded.

Also, the converse is true: if there exists $\rho>1$ and $C>0$ such that \eqref{eq3.1.3} holds, then $\mathfrak D>1$.
\end{theorem}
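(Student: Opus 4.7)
My plan is to derive the theorem from the identity
$$\|D_n\|_1 = \int_{X_B} |m_c(x)|\, R_L^n 1(x)\, d\mu(x). \qquad (\star)$$
To prove $(\star)$ I start from $|D_n(y)| = N^n|m_c(R^n y)|\prod_{k=0}^{n-1}|m_L(R^k y)|$ (Proposition~\ref{pr1.8}), use the $\bz$-periodicity of $m_c$ and $m_L$ to replace $R^k$ by the shift $\mathcal R^k$ on $X_B$, and iterate the invariance equation $\int f\,d\mu = N^{-n}\sum_{\mathbf b\in B^n}\int f\circ\tau_{b_1}\circ\cdots\circ\tau_{b_n}\,d\mu$. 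Under the substitution $y = \tau_{b_1}\circ\cdots\circ\tau_{b_n}(z)$, one has $\mathcal R^n y = z$ and $\mathcal R^{k}y = \tau_{b_{k+1}}\circ\cdots\circ\tau_{b_n}(z)$ for $0\le k\le n-1$, so $|m_c(\mathcal R^n y)|=|m_c(z)|$ pulls out of the multi-sum; after reversing the string index, what remains is exactly $R_L^n 1(z)$, the $N^n$ cancelling the $N^{-n}$ from the invariance prefactor.

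The converse direction follows immediately from $(\star)$: since $\|D_n\|_1 \leq \|m_c\|_\infty\,\|R_L^n 1\|_\infty$, the hypothesis $\|D_n\|_1\geq C\rho^n$ forces $\|R_L^n 1\|_\infty\geq (C/\|m_c\|_\infty)\rho^n$, whence $\mathfrak D = \lim_n\|R_L^n 1\|_\infty^{1/n}\geq \rho>1$.

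For the forward direction, positivity of $R_L$ gives $R_L^{n+m}1 \leq \|R_L^m 1\|_\infty\,R_L^n 1$, hence $\|R_L^{n+m}1\|_\infty\leq \|R_L^n 1\|_\infty\|R_L^m 1\|_\infty$; so by Fekete's lemma $\mathfrak D = \inf_n\|R_L^n 1\|_\infty^{1/n}$ and $\|R_L^n 1\|_\infty\geq \mathfrak D^n$ for every $n$. Fix $1<\rho<\mathfrak D$ and choose $x_n^*\in X_B$ with $R_L^n 1(x_n^*)\geq\mathfrak D^n$. My strategy is to smooth the pointwise bound into an integral bound by one more power of $R_L$: in
$$R_L^{n+k}1(x) \geq \prod_{j=1}^k|m_L(\tau_{b_j}\cdots\tau_{b_1}x)|\,R_L^n 1(\tau_{b_k}\cdots\tau_{b_1}x),$$
select $\mathbf b \in B^k$ (depending on $x_n^*$ only) by taking $b_j$ to be the $(k-j+1)$-st digit of the $B$-expansion of $x_n^*$, so that $\tau_{b_k}\cdots\tau_{b_1}(x) = x_n^* + O(R^{-k})$ uniformly in $x\in X_B$. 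Continuity of $R_L^n 1$ gives $R_L^n 1(\tau_{b_k}\cdots\tau_{b_1}x)\geq \tfrac12\mathfrak D^n$ for $k$ large enough, and restricting $x$ to a cylinder of positive $\mu$-measure where the weight product $\prod|m_L|$ is uniformly bounded below (possible because $m_L$ has only finitely many zeros on $X_B$) yields $\int R_L^{n+k}1\,d\mu \geq c\rho^n$. Combined with $|m_c|\geq\delta$ on a positive-$\mu$-measure set (since $m_c$ is also a trigonometric polynomial), $(\star)$ produces $\|D_{n+k}\|_1\geq C\rho^n$. Part~(ii) then follows from Banach--Steinhaus, exactly as in Theorem~\ref{th2.3}(ii).

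The central obstacle is the quantitative trade-off between $k$ and $n$ in the smoothing step: a crude Lipschitz estimate yields $\mathrm{Lip}(R_L^n 1) = O(N^n)$, which would force $k = \Theta(n)$ and give only the suboptimal rate $\rho^{1/(1+\alpha)}$ with $\alpha = \log(N/\mathfrak D)/\log R$. Recovering the sharp rate $\rho$ requires bounded-distortion-type control---uniform bounds on $\mathrm{Lip}(R_L^n 1)/R_L^n 1$---which is subtle because $|m_L|$ may vanish on $X_B$, so $\log|m_L|$ is not Lipschitz. One should exploit the Hadamard identity $\sum_b|m_L(\tau_b x)|^2 = 1$ of Proposition~\ref{pr1.1}(ii), together with the fact that the zero set of $m_L$ is $\mu$-negligible, to control the log-derivative of $|m_L|$ at the relevant scales; this is the most delicate part of the argument.
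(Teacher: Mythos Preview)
Your identity $(\star)$ is exactly the paper's Lemma~\ref{lem3.3}, and your proof of it is correct. Your converse argument and your invocation of Banach--Steinhaus for the existence of a divergent $s_n(f;0)$ also match the paper.

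The forward direction, however, has a genuine gap, which you yourself flag in the last paragraph. Your smoothing strategy converts the sup-norm bound $\|R_L^n1\|_\infty\ge\mathfrak D^n$ into an integral bound only at the cost of $k$ extra iterations, and you correctly observe that with $\mathrm{Lip}(R_L^n1)=O(N^n)$ one is forced into $k=\Theta(n)$, yielding only $\|D_m\|_1\ge C\rho^{m/(1+\alpha)}$ rather than the claimed $\|D_m\|_1\ge C\rho^m$ for every $\rho\le\mathfrak D$. Your proposed remedy---bounded-distortion control on $\mathrm{Lip}(R_L^n1)/R_L^n1$---is not carried out, and is genuinely delicate here because $|m_L|$ can vanish on $X_B$, so the standard H\"older-potential hypotheses of thermodynamic formalism fail for $\log|m_L|$. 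There is also a secondary issue: your lower bound on the weight product $\prod_{j=1}^k|m_L(\tau_{b_j}\cdots\tau_{b_1}x)|$ is asserted on ``a cylinder of positive $\mu$-measure,'' but since each factor is at most $1$ and $k$ may grow, you have not explained why this product does not decay to zero.

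The paper bypasses all of this with a single stroke: the Ruelle--Perron--Frobenius theorem (Theorem~\ref{th3.4}) supplies an $h\in L^\infty(\mu)$, $h\ge0$, $h\not\equiv0$, with $R_Lh=\mathfrak D\,h$. Since $h$ is bounded, $1\ge Ch$ for some $C>0$, and positivity of $R_L$ gives the \emph{pointwise} inequality $R_L^n1\ge C\mathfrak D^n h$ for every $n$. Plugging into $(\star)$ yields $\|D_n\|_1\ge C\mathfrak D^n\int|m_c|h\,d\mu = C'\mathfrak D^n$, with $C'>0$ because $|m_c|$ vanishes only at finitely many points. This gives the sharp rate $\rho=\mathfrak D$ immediately, with no smoothing or distortion estimates needed. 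The eigenfunction is precisely the missing ingredient that converts your sup-norm information into uniform pointwise information.
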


We will need some lemmas.

\begin{lemma}\label{lem3.3}
For every $n\in\bn$
\begin{equation}
\|D_n\|_1=\int |m_c(x)|(R_L^n1)(x)\,d\mu(x).
\label{eq3.3.1}
\end{equation}
\end{lemma}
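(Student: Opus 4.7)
The plan is to start from the closed form $|D_n(x)|=N^n|m_c(R^nx)|\prod_{k=0}^{n-1}|m_L(R^kx)|$ given by \eqref{eq1.15} in Proposition \ref{pr1.8}, and then peel off one layer at a time using the invariance equation \eqref{eq1.5} for $\mu$. Each peel should replace one factor $|m_L(R^{n-1}x)|$ and reduce $R^n\to R^{n-1}$ inside $m_c$, at the cost of introducing one application of the Ruelle operator $R_L$.

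First I would record the two periodicity facts that make the reduction work: $m_L$ is $\bz$-periodic because $L\subset\bz$, and $m_c$ is periodic under translation by any element of the form $R^{j}b$ with $b\in B$ and $j\ge 0$. The latter holds because the extreme $L$-cycle points lie in $B^\perp$ by Proposition \ref{pr1.6}, so for every such point $c$ one has $cb\in\bz$, hence $c\cdot R^jb=R^j(cb)\in\bz$, which makes each term $e^{-2\pi i c\cdot(y+R^jb)}$ equal to $e^{-2\pi i c\cdot y}$.

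Next, I would compute $|D_n(\tau_b x)|$ for $b\in B$ and $n\ge 1$. Using $R\tau_b(x)=x+b$, the factor $m_L(R^k\tau_b x)=m_L(R^{k-1}(x+b))$ for $k\ge1$ reduces to $m_L(R^{k-1}x)$ by $\bz$-periodicity (since $R^{k-1}b\in\bz$), while the $k=0$ factor remains as $m_L(\tau_b x)$. Similarly, $m_c(R^n\tau_b x)=m_c(R^{n-1}x+R^{n-1}b)=m_c(R^{n-1}x)$. Collecting these,
\begin{equation*}
|D_n(\tau_b x)|=N\,|m_L(\tau_b x)|\cdot|D_{n-1}(x)|.
\end{equation*}

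Applying the invariance equation to $|D_n|$ and substituting this identity gives
\begin{equation*}
\|D_n\|_1=\frac1N\sum_{b\in B}\int|D_n(\tau_b x)|\,d\mu(x)=\int|D_{n-1}(x)|(R_L 1)(x)\,d\mu(x).
\end{equation*}
I would then iterate: applying invariance again to $\int|D_k|(R_L^{n-k}1)\,d\mu$ and using the same reduction for $|D_k(\tau_b x)|$ together with the definition of $R_L$ yields $\int|D_{k-1}|(R_L^{n-k+1}1)\,d\mu$. Induction down to $k=0$, together with $D_0=m_c$, delivers \eqref{eq3.3.1}. The main delicate point is the bookkeeping that $m_c$ is invariant under translation by $R^{n-1}b$ rather than merely by $b$; once that periodicity is justified from $B^\perp$, the rest is a clean iteration of the invariance equation.
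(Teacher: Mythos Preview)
Your proof is correct and uses the same ingredients as the paper's: the product formula \eqref{eq1.15}, the $\bz$-periodicity of $m_L$, the periodicity of $m_c$ under translations by elements of the $\bz$-span of $B$ (coming from $c\in B^\perp$), and the invariance equation \eqref{eq1.5}. The only difference is presentational: the paper applies the $n$-fold iterated invariance equation in one stroke,
\[
\int f\,d\mu=\frac{1}{N^n}\sum_{b_0,\dots,b_{n-1}\in B}\int f(\tau_{b_{n-1}}\cdots\tau_{b_0}x)\,d\mu(x),
\]
substitutes \eqref{eq1.15}, simplifies all factors simultaneously via the periodicity, and recognizes the resulting sum as $|m_c(x)|\,(R_L^n1)(x)$; you instead peel off one layer at a time via the identity $|D_k(\tau_b x)|=N|m_L(\tau_b x)|\,|D_{k-1}(x)|$ and induct. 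Your version makes the role of the $B^\perp$-periodicity of $m_c$ more explicit, which the paper leaves implicit.
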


\begin{proof}
We use \eqref{eq1.15} and the invariance equation:
$$\int |D_n(x)|\,d\mu(x)=\int\sum_{b_0,\dots,b_{n-1}\in B}|m_c(R^n\tau_{b_{n-1}}\dots\tau_{b_0}x)|\prod_{k=0}|m_L(R^k\tau_{b_{n-1}}\dots\tau_{b_0}x)|\,d\mu(x)$$
$$=\int |m_c(x)|(R_L^n1)(x)\,d\mu(x).$$

\end{proof}

\begin{theorem}\label{th3.4}{\bf [Ruelle-Perron-Frobenius Theorem]}\cite{Bal00}
There exists a function $h\in L^\infty(\mu)$, $h\geq 0$, $h\neq 0$ such that $R_Lh=\mathfrak D h$.
\end{theorem}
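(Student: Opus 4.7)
The statement is the standard existence half of the Ruelle-Perron-Frobenius theorem for the positive transfer operator $R_L$, and I would outline a proof in two main steps.

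First, I would establish that $\mathfrak D$ is well-defined and coincides with the spectral radius. Set $a_n:=\|R_L^n 1\|_\infty$. Since $R_L$ is a positive operator and $R_L^m 1\le a_m\cdot 1$ pointwise on $X_B$, applying the positive operator $R_L^n$ gives $R_L^{n+m}1=R_L^n(R_L^m 1)\le a_m\,R_L^n 1$, hence the submultiplicativity $a_{n+m}\le a_n a_m$. Fekete's lemma applied to $\log a_n$ then yields $a_n^{1/n}\to\inf_n a_n^{1/n}=\mathfrak D$. The same estimate $|R_L^n f|\le\|f\|_\infty\,R_L^n 1$ shows that the operator norm of $R_L^n$ on $C(X_B)$ equals $a_n$, and Gelfand's spectral radius formula then identifies $\mathfrak D$ with the spectral radius of $R_L$ acting on $C(X_B)$.

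Second, I would construct the eigenfunction by a Schauder-Tychonoff fixed-point argument. The naive attempt is the map $\Phi(f):=R_L f/\|R_L f\|_\infty$ on the convex cone $\{f\in C^+(X_B):\|f\|_\infty=1\}$, but this set is not compact in the sup-norm topology. To gain compactness, I would pass to the Banach space of Lipschitz (or H\"older) functions on $X_B$ and verify a Lasota-Yorke type inequality
\[
\operatorname{Lip}(R_L f)\le \frac{C_1}{R}\operatorname{Lip}(f)+C_2\|f\|_\infty,
\]
which is available because $|m_L|$ is the modulus of a trigonometric polynomial and the $\tau_b$ contract by the factor $1/R$. Combined with the Arzel\`a-Ascoli theorem, this Lasota-Yorke bound makes $R_L$ quasi-compact on the Lipschitz space, so the Ionescu-Tulcea-Marinescu theorem (or Krein-Rutman applied to the invariant positive cone of Lipschitz functions) produces a nonzero, nonnegative eigenfunction $h$ at the eigenvalue equal to the spectral radius $\mathfrak D$. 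Lipschitz continuity of $h$ on the compact attractor $X_B$ immediately gives $h\in L^\infty(\mu)$.

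The principal obstacle is the Lasota-Yorke step, which is delicate near the zeros of $m_L$, where $|m_L|$ fails to be smooth. Because $m_L$ is a trigonometric polynomial it has only finitely many zeros in $X_B$, and the singularity can be absorbed either by working with a H\"older exponent small enough that $|m_L|^{1/2}$ times a H\"older function remains H\"older, or by a direct local estimate exploiting the non-overlap property from Theorem \ref{thno}. Once quasi-compactness is secured, the extraction of the leading Perron eigenfunction is entirely standard and is carried out in detail in \cite{Bal00}.
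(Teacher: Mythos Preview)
The paper does not give a proof of Theorem \ref{th3.4}; it simply quotes the result from \cite{Bal00}. The accompanying Remark \ref{rem3.5r} explains the one nontrivial adaptation: Ruelle's theorem in \cite{Bal00} is stated on the symbolic space $B^{\bn}$ and yields a \emph{continuous} nonnegative eigenfunction there; the paper then pushes this function forward to $X_B$ via the encoding map $\mathcal E$ of Proposition \ref{pr1.10}. Because $\mathcal E$ is only a.e.\ one-to-one, continuity may be lost on the measure-zero overlap set, which is why the conclusion is downgraded to $h\in L^\infty(\mu)$.

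Your proposal instead sketches a direct proof on $X_B$ via a Lasota--Yorke inequality and Ionescu-Tulcea--Marinescu/Krein--Rutman. This is a legitimate route and is essentially the content of the reference \cite{Bal00} itself, so in spirit you are reproving what the paper merely cites. Two comments on the details. First, the ``principal obstacle'' you flag is not really there: since $\bigl||a|-|b|\bigr|\le|a-b|$ and $m_L$ is a trigonometric polynomial, $|m_L|$ is globally Lipschitz on $\br$, including at its zeros; no special H\"older exponent or local estimate is needed to write the Lasota--Yorke bound. Second, the genuine point to check is that the contraction factor in your inequality (after iteration, this is governed by $R^{-n}\sup R_L^n 1$) is strictly below the spectral radius $\mathfrak D$, so that quasi-compactness actually separates a leading eigenvalue; this is where the hypothesis that the $\tau_b$ are uniform contractions by $1/R$ is used. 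The paper sidesteps both issues by working on the shift, where the metric can be chosen so that these estimates are automatic, and only afterwards descending to $X_B$.
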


\begin{remark}\label{rem3.5r}
Ruelle's theorem is formulated for the symbolic space and actually gives a continuous $h$. We can transfer the result to the attractor $X_B$ using the encoding in Proposition \ref{pr1.10}, but since there might be some points of overlap (of measure zero), the continuity of $h$ might be lost. However this happens only on a set of measure zero so the resulting function $h$ is still bounded.
\end{remark}

\begin{proof}[Proof of Theorem \ref{th3.1}]
Take $h$ as in Theorem \ref{th3.4}. Then $1\geq Ch$ for some $C>0$. Using the results above we have
$$\| D_n\|_1=\int |m_c(x)|R_L^n1(x)\,d\mu(x)\geq \int |m_c|CR_L^nh\,d\mu=C\int|m_c|\mathfrak D^n\cdot h\,d\mu$$$$=C\mathfrak D^n\int |m_c|h\,d\mu=C'\mathfrak D^n.$$
$C'$ is positive because $h\geq0, h\neq 0$ and $|m_c|$ is zero only at finitely many points.

The conclusion that $s_n(f,0)$ is unbounded follows as in the proof of Theorem \ref{th2.3}.

For the converse, \eqref{eq3.1.3} and \eqref{eq3.3.1} implies
$$\sup R_L^n1\cdot\int |m_c|\,d\mu\geq C\rho^n\mbox{ for $n$ large,}$$
and this shows that $\mathfrak D\geq\rho>1$.

\end{proof}

\begin{proposition}\label{pr3.2}
The operator $R_L$ is positive, i.e, $R_Lf\geq 0$ if $f\geq 0$. Also
\begin{equation}
R_L1\geq 1
\label{eq3.2.1}
\end{equation}
Consequently $\mathfrak D\geq 1$ and the sequence $(\|D_n\|_1)_n$ is increasing.
\end{proposition}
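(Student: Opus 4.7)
The plan is to first observe that the positivity of $R_L$ is immediate: each summand $|m_L(\tau_b x)|f(\tau_b x)$ in the defining formula \eqref{eq3.1.1} is nonnegative when $f\geq 0$. The substantive assertion is the pointwise bound $R_L 1\geq 1$, which I would deduce from the quadratic identity in Proposition \ref{pr1.1}(ii), namely $\sum_{b\in B}|m_L(\tau_b x)|^2=1$, together with the fact that $|m_L|\leq 1$. The latter follows directly from the definition \eqref{eqml}: $m_L$ is a normalized sum of $N$ unimodular exponentials, hence of modulus at most one. Writing $a_b:=|m_L(\tau_b x)|\in[0,1]$, the elementary inequality $a_b^2\leq a_b$ then gives
\[
1=\sum_{b\in B} a_b^2\leq \sum_{b\in B} a_b=(R_L 1)(x).
\]

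For the statement that $\mathfrak D\geq 1$, I will iterate. Since $R_L$ is linear and positive, it is monotone: $f\geq g$ implies $R_L f\geq R_L g$. Applied inductively to $R_L 1\geq 1$, this yields $R_L^{n+1}1\geq R_L^n 1\geq\cdots\geq 1$ pointwise on $X_B$. Therefore $\sup_{x\in X_B}(R_L^n 1)(x)\geq 1$ for every $n\in\bn$, and taking $n$-th roots and passing to the limit gives $\mathfrak D\geq 1$.

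For the monotonicity of $(\|D_n\|_1)_n$, I combine the chain $R_L^{n+1}1\geq R_L^n 1$ from the previous step with the integral representation of Lemma \ref{lem3.3}, $\|D_n\|_1=\int|m_c|(R_L^n 1)\,d\mu$. Since $|m_c|\geq 0$, integrating the pointwise inequality against $|m_c|\,d\mu$ shows that $\|D_{n+1}\|_1\geq\|D_n\|_1$.

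The argument is short and I do not anticipate any real obstacle. The only subtle point worth articulating carefully is the passage from the quadratic identity of Proposition \ref{pr1.1}(ii) to the $\ell^1$-type bound $R_L 1\geq 1$: one must observe both that $0\leq|m_L(\tau_b x)|\leq 1$ (so that $a_b^2\leq a_b$) and that summing the quadratic identity is precisely what produces the matching constant $1$ on the right-hand side.
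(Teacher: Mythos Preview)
Your proof is correct and follows essentially the same approach as the paper: positivity is declared trivial, the inequality $R_L1\geq 1$ is obtained from $|m_L|\leq 1$ together with the quadratic identity $\sum_{b\in B}|m_L(\tau_b x)|^2=1$, and then induction plus Lemma~\ref{lem3.3} give $\mathfrak D\geq 1$ and the monotonicity of $(\|D_n\|_1)_n$. The only cosmetic difference is that you justify $|m_L|\leq 1$ via the triangle inequality on the definition~\eqref{eqml}, whereas the paper attributes it to Proposition~\ref{pr1.1}; both are valid.
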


\begin{proof}
First statement is trivial. For \eqref{eq3.2.1}, using Proposition \ref{pr1.1}, we have that $|m_L|\leq 1$ and
$$R_L1(x)=\sum_{b\in B}|m_L(\tau_bx)|\geq\sum_{b\in B}|m_L(\tau_bx)|^2=1.$$
By induction $R_L^{n+1}1\geq R_L^n1\geq 1$ so $\mathfrak D\geq 1$ and, with \eqref{eq3.3.1}, the sequence $\|D_n\|_1$ is increasing.
\end{proof}

\begin{proposition}\label{pr3.3}
Suppose there exist $h\in L^\infty(\mu)$, $h\geq c>0$, $\mu$-a.e., such that $R_Lh=h$. Then $\mathfrak D=1$ and $\|D_n\|_1$ is bounded.
\end{proposition}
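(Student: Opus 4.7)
The plan is to iterate the eigenvalue equation $R_Lh=h$ and combine it with positivity of $R_L$ (Proposition \ref{pr3.2}) to bound $R_L^n 1$ uniformly, which will give both conclusions essentially for free via Lemma \ref{lem3.3}.

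First I would iterate the fixed-point equation. Applying $R_L$ repeatedly to $R_Lh=h$ yields $R_L^n h = h$ (as elements of $L^\infty(\mu)$). Since by hypothesis $h\geq c>0$ $\mu$-a.e. and $R_L$ is positive, the inequality $c\cdot 1 \leq h$ gives, after applying $R_L^n$ to both sides,
$$c\cdot R_L^n 1 \leq R_L^n h = h \leq \|h\|_\infty \qquad \mu\text{-a.e.},$$
so $R_L^n 1 \leq \|h\|_\infty/c$ $\mu$-a.e., uniformly in $n$.

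Next, I would upgrade this a.e.\ bound to a genuine pointwise bound on $X_B$, which is needed to estimate $\mathfrak{D}$ since the definition \eqref{eq3.1.2} uses $\sup_{x\in X_B}$. The function $R_L^n 1 = \sum_{b_0,\dots,b_{n-1}\in B}\prod_{k=0}^{n-1}|m_L\circ \tau_{b_{k}}\cdots\tau_{b_0}|$ is continuous on $X_B$ (composition and products of the continuous function $|m_L|$ with the $\tau_b$). The invariant measure $\mu$ has full support on $X_B$: $\operatorname{supp}(\mu)$ is a nonempty closed subset of $X_B$ that is invariant under the IFS (by \eqref{eq1.4}), hence equals $X_B$ by uniqueness of the attractor. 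So the a.e.\ bound extends everywhere:
$$\sup_{x\in X_B} R_L^n 1(x) \leq \frac{\|h\|_\infty}{c}.$$
Taking $n$-th roots and sending $n\to\infty$ gives $\mathfrak{D}\leq 1$, and since Proposition \ref{pr3.2} gives $\mathfrak{D}\geq 1$, we conclude $\mathfrak{D}=1$.

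For the boundedness of $\|D_n\|_1$, I would simply plug the a.e.\ bound into Lemma \ref{lem3.3}:
$$\|D_n\|_1 = \int |m_c(x)|\,(R_L^n 1)(x)\,d\mu(x) \leq \frac{\|h\|_\infty}{c}\int |m_c|\,d\mu \leq \frac{\|h\|_\infty}{c}\,\|m_c\|_\infty,$$
which is finite and independent of $n$. The only mildly delicate point — and the one I would flag as the main thing to verify carefully — is the promotion of the a.e.\ bound on $R_L^n 1$ to a pointwise bound used in computing $\mathfrak{D}$; for the integral bound yielding boundedness of $\|D_n\|_1$, the a.e.\ version already suffices.
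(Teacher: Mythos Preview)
Your proof is correct and follows essentially the same route as the paper: from $1\leq \frac{1}{c}h$ and positivity of $R_L$ you get $R_L^n 1\leq \frac{1}{c}R_L^n h=\frac{1}{c}h$, whence $\mathfrak D=1$ via \eqref{eq3.1.2} and Proposition \ref{pr3.2}, and $\|D_n\|_1$ bounded via Lemma \ref{lem3.3}. The paper's proof is a one-liner that glosses over the a.e.\ versus pointwise issue you flag; your continuity-plus-full-support argument is a legitimate way to close that gap, and is more careful than the original.
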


\begin{proof}
We have $R_L^n1\leq \frac1c R_L^nh=\frac1ch$. The rest follows from the definition of $\mathfrak D$ and from Lemma \ref{lem3.3}.
\end{proof}

\begin{proposition}\label{pr3.4}
Let $h\geq 0$, $h\neq 0$ be a continuous fixed point of the transfer operator $R_Lh=h$, $h\in C(X_B)$. Then the zeroes of $h$ are contained in the extreme $B$-cycles, i.e., if $h(z_0)=0$ then $z_0$ is in a cycle $C$ for the IFS $(\tau_b)_{b\in B}$ and $|m_L(z)|=1$ for all $z\in C$.
\end{proposition}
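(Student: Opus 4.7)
The plan is to exploit the non-negativity in $R_Lh=h$ to get a forward invariance of $Z:=h^{-1}(0)$, use compactness and the contractivity of the $\tau_b$'s to extract a $B$-cycle inside $Z$, and finally upgrade that cycle to an extreme one through $z_0$. At $z_0\in Z$ the identity
\[
0=h(z_0)=\sum_{b\in B}|m_L(\tau_b z_0)|\,h(\tau_b z_0)
\]
together with non-negativity of every summand forces, for each $b$, either $|m_L(\tau_b z_0)|=0$ or $\tau_b z_0\in Z$. Since $\sum_b|m_L(\tau_b z_0)|^2=1$ by Proposition \ref{pr1.1}(ii), some $b_1\in B$ satisfies $|m_L(\tau_{b_1} z_0)|>0$, forcing $z_1:=\tau_{b_1}z_0\in Z$. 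Iterating produces an orbit $(z_n)_{n\ge 0}\subset Z$ with $z_n=\tau_{b_n}z_{n-1}$ and $|m_L(z_n)|>0$ for $n\ge 1$.

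Now $(z_n)$ lies in the compact $Z$ and the letters $b_n$ take values in the finite $B$; a diagonal/pigeonhole extraction yields indices $n_k\uparrow\infty$ with $z_{n_k}\to z^*$ and eventually periodic blocks $(\beta_1,\ldots,\beta_p)$. The composition $\phi:=\tau_{\beta_p}\cdots\tau_{\beta_1}$ is an $R^{-p}$-contraction on $X_B$ with unique fixed point $w$, and the estimate
\[
|z_{n_k}-w|\le\frac{|z_{n_k}-\phi(z_{n_k})|}{1-R^{-p}}
\]
forces $w\in Z$ by closedness. The induced $B$-cycle $(x_0,\ldots,x_{p-1})\subset Z$ inherits $|m_L(x_i)|>0$. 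The crucial step is then upgrading $|m_L|>0$ to $|m_L|=1$ on the cycle: any leakage $|m_L(x_{i+1})|<1$ would, via $\sum_b|m_L(\tau_b x_i)|^2=1$, produce some $b\ne b_i$ with $|m_L(\tau_b x_i)|>0$ and $\tau_b x_i\in Z$ off the cycle. I would rule out such branching by combining continuity of $h$, the integral identity $\int h\,d\mu=N\int|m_L|\,h\,d\mu$ (obtained from $R_Lh=h$ together with the invariance equation for $\mu$), and the ergodicity of the shift $\mathcal R$ (Proposition \ref{pr1.10}), arguing that iterated branching would force $h$ to vanish on a set of positive $\mu$-measure and thence identically, contradicting $h\ne 0$. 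Thus $|m_L(x_i)|=1$, and by the $L$-analog of Proposition \ref{pr1.6} the cycle lies in $L^\perp\cap X_B$, a finite set.

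To place $z_0$ itself on the cycle, I would use that $z_n\in L^\perp$ together with the algebraic identity $z_0=R^n z_n-\sum_{k=1}^n R^{k-1}b_k$ (the inverse of $z_n=\tau_{b_n}\cdots\tau_{b_1}z_0$) forces $z_0\in L^\perp$. Then for any $z\in Z\cap L^\perp$ one has $|m_L(z)|=1$, so writing $z=\tau_{b_1}(\mathcal R z)$ and combining $|m_L(\tau_{b_1}\mathcal R z)|=|m_L(z)|=1$ with $\sum_b|m_L(\tau_b\mathcal R z)|^2=1$ forces $|m_L(\tau_b\mathcal R z)|=0$ for $b\ne b_1$; then $h(\mathcal R z)=|m_L(z)|h(z)=0$ and $\mathcal R z\in Z\cap L^\perp$. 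The $\mathcal R$-orbit of $z_0$ stays in the finite set $Z\cap L^\perp$ and is therefore eventually periodic; combined with the forward-periodic structure, this places $z_0$ on an extreme $B$-cycle. The main obstacle is precisely the upgrade $|m_L|>0\to|m_L|=1$ on the extracted cycle: the forward construction gives only strict positivity, and excluding off-cycle branching of $Z$ requires a global rigidity argument that simultaneously uses the fixed-point equation for $h$, the Hadamard identity, continuity of $h$, and the ergodic structure of the shift $\mathcal R$.
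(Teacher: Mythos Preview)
Your proposal has two genuine gaps, and in both places the paper's proof supplies a cleaner idea that you are missing.

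\medskip
\textbf{Gap 1: the upgrade $|m_L|>0\Rightarrow|m_L|=1$ on the extracted cycle.} You correctly identify this as the main obstacle, but the sketch you offer (``iterated branching would force $h$ to vanish on a set of positive $\mu$-measure, via ergodicity'') is not a proof. Branching only produces countably many new zeros of $h$, and there is no clear reason this set should have positive measure; the integral identity you mention does not rule out $h$ vanishing on a $\mu$-null set. The paper bypasses this entirely by first proving that \emph{every} zero $z_0$ of $h$ is periodic (i.e.\ $R^pz_0\equiv z_0\pmod{\bz}$ for some $p$). Once that is known, take any $b'\neq b_0$: then $\tau_{b'}z_0$ is \emph{not} periodic (because $b_0\not\equiv b'\pmod R$), hence $h(\tau_{b'}z_0)\neq 0$ (else $\tau_{b'}z_0$ would itself be a zero and therefore periodic), which forces $m_L(\tau_{b'}z_0)=0$ for every $b'\neq b_0$, and the QMF identity \eqref{eqqmfl} gives $|m_L(\tau_{b_0}z_0)|=1$ directly. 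No upgrade step is needed.

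\medskip
\textbf{Gap 2: placing $z_0$ itself on the cycle.} Your argument writes $z_0=R^nz_n-\sum_{k}R^{k-1}b_k$ and concludes $z_0\in L^\perp$ ``using that $z_n\in L^\perp$''. But you never establish $z_n\in L^\perp$ for any finite $n$; your compactness/pigeonhole extraction only produces a limit cycle $(x_i)\subset L^\perp$, and the forward orbit $(z_n)$ merely approaches it. Since $L^\perp$ is discrete, convergence does not give membership. The paper again avoids this by proving periodicity of $z_0$ at the outset.

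\medskip
\textbf{The missing idea.} The key observation you did not use is that $m_L$ is a trigonometric polynomial, so it has only finitely many zeros in the bounded set $X_B$. If $z_0$ were \emph{not} periodic, then all the points $\tau_{a_{n-1}}\cdots\tau_{a_0}z_0$ (over all words) are distinct, so for $n$ large none of them is a zero of $m_L$. Applying $R_Lh=h$ from $z_n$ then shows $h$ vanishes at $\tau_{a_k}\cdots\tau_{a_0}z_n$ for \emph{every} word, hence on a dense subset of $X_B$, contradicting $h\not\equiv 0$. This single step simultaneously (a) puts $z_0$ on a cycle and (b) provides the dichotomy periodic/non-periodic that makes the $|m_L|=1$ conclusion immediate.
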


\begin{proof}
We call a point $z\in\br$ periodic if $R^pz\equiv z\mod\bz$. Let's prove first that $h(z_0)=0$ implies that $z_0$ is periodic. Suppose not. Then we claim that the points $\tau_{b_{n-1}}\dots\tau_{b_0}z_0\neq \tau_{b_{m-1}'}\dots\tau_{b_0'}z_0$ if $b_0\dots b_{n-1}\neq {b_0'}\dots b_{m-1}'$. If not, then, assume $n\geq m$, and we have
$$z_0+b_0+\dots+R^{n-1}b_{n-1}=R^{n-m}z_0+R^{n-m}b_0'+\dots+R^{n-1}b_{m-1}'.$$
If $n>m$ then this implies that $z_0\equiv R^{n-m}z_0\mod\bz$ so $z_0$ is periodic. If $n=m$, this is impossible since the elements of $B$ are not congruent $\mod R\bz$.

Since $R_Lh=h$ we have
$$0=h(z_0)=\sum_{b\in B}|m_L(\tau_bz_0)|h(\tau_bz_0).$$
Then for all $b\in B$, we have either $m_L(\tau_bz_0)=0$ or $h(\tau_bz_0)=0$. We cannot have $m_L(\tau_bz_0)=0$ for all $b\in B$, because of \eqref{eqqmfl}. Therefore there exists $b_0\in B$ such that $z_1=\tau_{b_0}z_0$ is a zero for $h$. By induction, we can find $b_1,\dots,b_n,\dots$ in $B$ such that $z_n=\tau_{b_{n-1}}\dots\tau_{b_0}z_0$ is a zero for $h$.

If $z_0$ is not periodic then we proved above that the points $\tau_{a_n}\dots\tau_{a_0}z_0$ are distinct for all words $a_0\dots a_n$ with letters in $B$. Since $m_L$ has finitely many zeros, it follows that for $n$ large enough $m_L(\tau_{a_n}\dots\tau_{a_0}z_0)\neq 0$. Fix $n$ large, the previous argument shows that $h(\tau_{a_k}\dots\tau_{a_0}z_n)=0$ for all $k$ and all $a_0,\dots,a_k\in B$. But then $h$ is zero on a dense subset of $X_B$ so $h=0$, since $h$ is continuous.

This contradiction shows that $z_0$ is periodic, and the same argument shows that $z_n$ is periodic for all $n$.

We have $z_1=\tau_{b_0}z_0$ and $z_1$ is periodic. If we take $b_0'\in B$, $b_0'\neq b_0$ then $\tau_{b_0'}z_0$ cannot be periodic (since $b_0,b_0'$ are incongruent $\mod R\bz$). Therefore, using again the same argument as before we cannot have $h(\tau_{b_0'}z_0)=0$ so we must have $m_L(\tau_{b_0'}z_0)=0$. Then, using \eqref{eqqmfl}, we must have $|m_L(\tau_{b_0}z_0)|=1$. By induction, we obtain $|m_L(z_n)|=1$ for all $n$, and since each $z_n$ is periodic, the points $z_n$ are on a cycle for $(\tau_b)_{b\in B}$.
\end{proof}

\section{Lebesgue measure}\label{sec3}

In this section we will make the following assumptions:

\begin{equation}
\mbox{ The sets $B$ and $L$ are complete sets of representatives for $\bz/R\bz$}
\label{eq4.1}
\end{equation}
and
\begin{equation}
\mbox{The greatest common divisor of $B$ is 1.}
\label{eq4.2}
\end{equation}
We will apply Theorems \ref{th2.3} and \ref{th3.1} to the divergence of certain rearranged Fourier series of a continuous function, see
Subsection \ref{scambled.subsection}.

We denote by $\mathcal L$ the Lebesgue measure on $\br$.
The following theorem is known:

\begin{theorem}\label{th4.2}
Under the assumptions \eqref{eq4.1} and \eqref{eq4.2}, the invariant measure $\mu$ is the Lebesgue measure $\mathcal L$ restricted to $X_B$,  and the Lebesgue measure of $X_B$ is $\mathcal L(X_B)=1$. Moreover
$X_B$ tiles $\br$ by translations with $\bz$, and $\mu$ has spectrum $\bz$.
\end{theorem}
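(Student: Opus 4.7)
The strategy is to combine Theorem \ref{thno}, Theorem \ref{thdj}, and a short completeness argument, after observing that the Hadamard hypothesis is automatic here. Under \eqref{eq4.1}, with $|B|=|L|=R$ and both sets complete residue systems mod $R$, the inner product of two rows of $R^{-1/2}\bigl(e^{2\pi i bl/R}\bigr)_{b,l}$ reduces to $\frac{1}{R}\sum_{j=0}^{R-1} e^{2\pi i (b-b')j/R}=\delta_{b,b'}$, so $(B,L)$ is a Hadamard pair. Theorem \ref{thno} then gives non-overlap and Hausdorff dimension $D=\log_R R=1$, whence $\mu=\mathcal{L}|_{X_B}/k$ with $k:=\mathcal{L}(X_B)\in(0,\infty)$.

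Next I would use Theorem \ref{thdj} to obtain a spectrum $\Lambda=\bigcup_n\Lambda_n$ and prove $\Lambda=\bz$. By Proposition \ref{pr1.6} together with \eqref{eq4.2}, $B^\perp=\bz$; hence $\Lambda_0\subset\bz$ and the recursion $\Lambda_{n+1}=R\Lambda_n+L\subset\bz$ propagates the inclusion. For the reverse inclusion, given $n\in\bz$ run the ``greedy'' iteration $n_{j+1}:=(n_j-l_j)/R$, where $l_j\in L$ is the unique element with $l_j\equiv n_j\pmod{R}$ (available since $L$ is complete mod $R$). The iteration stays in $\bz$ and contracts ($|n_{j+1}|\le(|n_j|+M)/R$ with $M:=\max_{l\in L}|l|$), so it eventually enters a period-$p$ cycle inside the finite set $[-M/(R-1),M/(R-1)]\cap\bz$. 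Setting $c_i:=-n_{j_0+i}$ for the eventual cycle, the recurrence $n_j=l_j+Rn_{j+1}$ rewrites as $c_{i+1}=\sigma_{l_{j_0+i}}(c_i)$, making $\{c_i\}$ an $L$-cycle; because each $c_i\in\bz$ and $B\subset\bz$, the estimate $|m_B(c_i)|=1$ is automatic, so the cycle is extreme. Hence $n_{j_0}=-c_0\in\Lambda_0$, and the telescoping identity $n=\sum_{i=0}^{j_0-1}R^i l_i + R^{j_0}n_{j_0}$ places $n\in\Lambda_{j_0}\subset\Lambda$.

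Finally, I would force $k=1$ via completeness. The Fourier coefficients of the $\bz$-periodization $\Phi(x):=\sum_{m\in\bz}\chi_{X_B}(x-m)$ are $\widehat{\chi_{X_B}}(n)=k\,\widehat{\mu}(n)=k\prod_{j\ge 1} m_B(-n/R^j)$; for $n\in\bz\setminus\{0\}$, taking the smallest $s$ with $R^s\nmid n$ yields $m_B(-n/R^s)=\frac{1}{R}\sum_{j=0}^{R-1}e^{-2\pi i jn/R^s}=0$, so $\Phi$ is a.e.\ equal to the positive integer $k$, which is therefore the covering multiplicity of $\br$ by $\bz$-translates of $X_B$; equivalently, the projection $\pi:\br\to\bt$ restricted to $X_B$ is $k$-to-$1$ a.e. For $f\in L^2(X_B,\mathcal{L})$ the computation
$$\langle f,e_n\rangle_\mu=\frac{1}{k}\int_0^1 \widetilde{f}(y)\,e^{-2\pi i ny}\,dy,\qquad \widetilde{f}(y):=\sum_{m\in\bz} f(y+m)\chi_{X_B}(y+m),$$
shows $f\perp e_n$ for every $n\in\bz$ iff $\widetilde{f}\equiv 0$. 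If $k\ge 2$, one easily builds $f\not\equiv 0$ with $\widetilde{f}\equiv 0$ (put $+1$ on one preimage fiber and $-1$ on another over a set of $y$'s of positive measure), contradicting the orthonormal-basis property from the previous paragraph. So $k=1$: $\mathcal{L}(X_B)=1$, $\mu=\mathcal{L}|_{X_B}$, and $\pi|_{X_B}$ is a measure-preserving bijection modulo null sets, i.e., $X_B$ tiles $\br$ by $\bz$. The main obstacle I anticipate is the cycle-matching in paragraph two, namely identifying the eventual periodic orbit of the greedy iteration precisely with the negation of an extreme $L$-cycle so that Theorem \ref{thdj} applies; everything else is bookkeeping around Poisson summation and the Parseval identity.
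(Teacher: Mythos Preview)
Your proof is correct, and it takes a genuinely different route from the paper's.

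The paper's argument is much shorter because it outsources the hard part: it cites \cite{Wan99} for the facts that $\mu$ is normalized Lebesgue measure on $X_B$ and that $X_B$ tiles $\br$ by some sublattice $d\bz$. It then observes (via Proposition~\ref{pr1.6} and \eqref{eq4.2}) that the spectrum $\Lambda$ from Theorem~\ref{thdj} sits inside $B^\perp=\bz$; since $\bz\subset\frac1d\bz$ and $\frac1d\bz$ is a spectrum coming from the tiling, a maximality-of-orthonormal-sets argument forces $\Lambda=\bz$ and then $d=1$.

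Your approach is entirely self-contained within the paper. You replace the appeal to \cite{Wan99} by Theorem~\ref{thno} (which already gives $\mu=\mathcal L|_{X_B}/k$), you prove $\Lambda=\bz$ \emph{directly} by running the greedy base-$R$ expansion backwards and identifying the eventual periodic orbit with the negative of an extreme $L$-cycle, and you then deduce $k=1$ and the $\bz$-tiling from a Poisson-summation/periodization argument showing that completeness of $\{e_n:n\in\bz\}$ fails when the covering multiplicity exceeds $1$. The cycle-matching step you flagged as the main obstacle is fine: with $c_i:=-n_{j_0+i}$ the relation $n_j=l_j+Rn_{j+1}$ indeed rewrites as $c_{i+1}=\sigma_{l_{j_0+i}}(c_i)$, and $c_i\in\bz$ with $B\subset\bz$ gives $m_B(c_i)=1$. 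One small point worth making explicit in your write-up of $m_B(-n/R^s)=0$: you are using that $B$ is only a complete residue system, not $\{0,\dots,R-1\}$, so the identity $e^{-2\pi i b n/R^s}=e^{-2\pi i j n/R^s}$ for $b\equiv j\pmod R$ relies on $R^{s-1}\mid n$; this is exactly what your choice of $s$ guarantees. What you gain over the paper is independence from the external tiling result and an explicit description of how every integer is reached from $\Lambda_0$; what the paper gains is brevity.
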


\begin{proof}
It is known, see e.g. \cite{Wan99}, that $\mu$ is Lebesgue measure on $X_B$ renormalized by $\mathcal L(X_B)$, and it tiles $\br$ by a sublattice of $\bz$. We just have to prove that this sublattice is $\bz$.
Suppose $X_B$ tiles by $d\bz$. Then $\mu$ has spectrum the dual lattice $\frac1d\bz$. We analyze the extreme $L$-cycles. By Proposition \ref{pr1.6}, these are contained in $B^\perp=\bz$ since the greatest common divisor of $B$ is 1. Thus all the extreme $L$-cycles are contained in $\bz$. So the spectrum is contained in $\bz$, so it has to be $\bz$.
\end{proof}

\begin{definition}\label{def4.3}
We define the Laurent polynomial
$$p_L(z):=\sum_{l\in L}z^l.$$
For a Laurent polynomial $p(z)$, we define the {\it Mahler measure} (see \cite{BoEr95})
$$\Delta(p)=\exp\left(\int_0^1\log|p(e^{2\pi ix})|\,dx\right).$$
\end{definition}

\begin{remark}\label{rem3.5}
If $p(z)=a(z-z_1)\dots(z-z_n)$ is a polynomial with $z_1,\dots, z_n$ as its roots, then the Mahler measure is (see \cite{BoEr95}):

\begin{equation}
\Delta(p)=a\prod_{k=1}^n\max\{1,|z_k|\}.
\label{eqmm}
\end{equation}
\end{remark}

\begin{theorem}\label{th4.4}
Suppose  that \eqref{eq4.1} and \eqref{eq4.2} are satisfied. If the Mahler measure $\Delta(p_L)>1$, then for every $1<\rho<\Delta(p_L)$, there exists a constant $C>0$ such that
$$\int |D_n(x)|\,d\mu(x)\geq C\rho^n\ \mbox{ for large $n$}.$$
\end{theorem}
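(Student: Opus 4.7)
\bigskip
\noindent\textbf{Proof plan for Theorem \ref{th4.4}.}
The plan is to reduce this statement to Theorem \ref{th2.3} by identifying the quantity $\Delta(Nm_L)$ defined in \eqref{eq2.3.1} with the Mahler measure $\Delta(p_L)$ under the present assumptions. The key input is Theorem \ref{th4.2}, which tells us that $\mu$ is Lebesgue measure restricted to $X_B$ (normalized so that $\mathcal L(X_B)=1$) and that $X_B$ tiles $\br$ by the integer lattice $\bz$.

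First I would observe the trivial algebraic identification
\[
Nm_L(x)=\sum_{l\in L}e^{2\pi i lx}=p_L(e^{2\pi i x}),\qquad (x\in\br),
\]
so that $\log|Nm_L(x)|=\log|p_L(e^{2\pi i x})|$ is a $\bz$-periodic function of $x$. Next, using Theorem \ref{th4.2} to rewrite the defining integral of $\Delta(Nm_L)$, I would compute
\[
\log\Delta(Nm_L)=\int \log|Nm_L(x)|\,d\mu(x)=\int_{X_B}\log|p_L(e^{2\pi ix})|\,d\mathcal L(x).
\]
Because $X_B$ tiles $\br$ by $\bz$ (so $X_B$ is a fundamental domain for the $\bz$-action on $\br$ up to a Lebesgue null set), and the integrand is $\bz$-periodic, the right-hand side equals the integral of the same function over the standard fundamental domain $[0,1]$. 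Hence
\[
\log\Delta(Nm_L)=\int_0^1\log|p_L(e^{2\pi ix})|\,dx=\log\Delta(p_L),
\]
i.e.\ $\Delta(Nm_L)=\Delta(p_L)$.

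With this identification in hand, the hypothesis $\Delta(p_L)>1$ is exactly the hypothesis $\Delta(Nm_L)>1$ of Theorem \ref{th2.3}, and the $L^1$-lower bound \eqref{eq2.3.2} transfers verbatim, yielding the desired inequality for every $1<\rho<\Delta(p_L)$.

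The only nontrivial step is the fundamental-domain exchange between $X_B$ and $[0,1]$; strictly speaking one should note that the set of points where $p_L(e^{2\pi ix})=0$ has Lebesgue measure zero (finitely many points in $[0,1]$) and that the overlaps $(X_B+n)\cap(X_B+n')$ for $n\neq n'$ in $\bz$ are Lebesgue-null by the tiling property, so the equality of the two integrals of the $\bz$-periodic function $\log|p_L(e^{2\pi ix})|$ over the two fundamental domains is legitimate (the integrand is bounded above and its negative part is integrable by the standard existence of the Mahler measure). I do not expect a real obstacle; the substance of the theorem already lies in Theorem \ref{th2.3}, and Theorem \ref{th4.4} is essentially the translation of that result into the classical Mahler-measure language under the tiling hypotheses \eqref{eq4.1}--\eqref{eq4.2}.
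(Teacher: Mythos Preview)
Your proposal is correct and follows essentially the same approach as the paper: the paper isolates the fundamental-domain exchange as a separate lemma (Lemma \ref{lem4.5}, stating that $\int f\,d\mu=\int_0^1 f\,dx$ for $\bz$-periodic $f$), then notes $Nm_L(x)=p_L(e^{2\pi ix})$ and invokes Theorem \ref{th2.3}, exactly as you do. Your additional care about the null sets (zeros of $p_L$ and tiling overlaps) is a welcome elaboration the paper leaves implicit.
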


\begin{proof}
We use a Lemma:
\begin{lemma}\label{lem4.5}
If the function $f$ is $\bz$-periodic, then
$$\int f\,d\mu=\int_0^1 f(x)\,dx.$$
\end{lemma}

\begin{proof}
By Theorem \ref{th4.2}, $\mu$ is the Lebesgue measure on $X_B$, $\mathcal L(X_B)=1$ and $X_B$ tiles $\br$ by $\bz$. This implies that $X_B$ is translation congruent to $[0,1]$, i.e., there exists a partition $(A_k)_{k\in\bz}$ of $X_B$ such that $(A_k-k)_{k\in \bz}$ is a partition of $[0,1]$. Then the lemma follows.
\end{proof}

Note that $Nm_L(x)=p_L(e^{2\pi i x})$. Theorem \ref{th4.4} follows from Theorem \ref{th2.3}.
\end{proof}

\subsection{Scrambled Fourier series}\label{scambled.subsection}
We consider now the case $B=\{0,\dots,R-1\}$. In this case the measure $\mu$ is the Lebesgue measure on the unit interval. We take $L$ to be a complete set of representatives modulo $R$. The sets $\Lambda_n$ will give a certain ``scrambling'', i.e., a rearrangement of the integers with algebraic structure, cf. Example \ref{ex3.7} and \ref{ex3.8}.

\begin{definition}\label{def4.6}
Let $R>1$ be an integer and let $L$ be a complete set of representatives for $\bz/ R\bz$, $0\in L$. We define the following subsets of $\bz$:
$$\Lambda_0:=-\bigcup\left\{C : C\mbox{ is an extreme $L$-cycle}\right\},$$
$$\Lambda_{n+1}=R\Lambda_n+L,\mbox{ for all $n\geq0$}.$$
Note that we do have (see Proposition \ref{pr1.8}):
$$\Lambda_n\subset\Lambda_{n+1}\mbox{ and }\bigcup_{n\in\bn}\Lambda_n=\bz.$$
We define the  partial sum of {\it scrambled Fourier series}, by summing over the sets $\Lambda_n$:
\begin{equation}\label{scambledfourier.def}
s_n(f,x)=\sum_{\lambda\in\Lambda_n}\left(\int_0^1f(x)e^{-2\pi i\lambda y}\,dy\right)\cdot e^{2\pi i\lambda x},\quad(x\in\br),\end{equation}
and the {\it scrambled Dirichlet kernel}
\begin{equation}\label{scambleddirichlet.def} D_n(x)=\sum_{\lambda\in\Lambda_n}e^{2\pi i \lambda x},\quad(x\in\br,n\in\bn).\end{equation}
\end{definition}

\begin{example}\label{ex3.7}
For the special case that $L=\{0,1,...,R-1\}$ with $R\ge 2$, the  partial sum $s_n(f,x)$ of scrambled Fourier series in \eqref{scambledfourier.def}
and the  scrambled Dirichlet kernel $D_n(x)$ in \eqref{scambleddirichlet.def}
become the  traditional dyadic partial sum of Fourier series
$$\sum_{\lambda=-R^n}^{R^n-1} \left(\int_0^1f(x)e^{-2\pi i\lambda y}\,dy\right)\ e^{2\pi i\lambda x}$$
and the Dirichlet kernel $$\sum_{\lambda=-R^n}^{R^n-1}e^{2\pi i \lambda x}=e^{-\pi ix} \frac{\sin (2\pi R^nx)}{\sin \pi x}$$ respectively.
The reasons are that the extreme cycles can only be 0 and 1 and both of them are indeed 1-cycles as $$0=0/R\mbox{ and } 1=(1+(R-1))/R,$$
and that
 $\Lambda_n, n\ge 0$, in Definition \ref{def4.6} are as follows:  $\Lambda_0=\{-1,0\}$,
$$\Lambda_1= R \Lambda_0+L=\{-R,-R+1,...,0, ...,R-1\},$$ and
$$\Lambda_k=R\Lambda_{k-1}+L=\{-R^k,-R^k+1,...,0,...,R^k-1\}$$ for $k\ge 2$ by induction.
\end{example}

Taking $B:=\{0,\dots,R-1\}$, we have the following corollary from Theorems \ref{th4.4} and \ref{th2.3}.

\begin{corollary}\label{cor4.7}
If the Mahler measure $\Delta(p_L)>1$, then
 \begin{itemize}
 \item [{(i)}] The $L^1$-norm of the scrambled Dirichlet kernel grows exponentially fast, more precisely for any $1<\rho<\Delta(p_L)$ there exists $C>0$ such that
$$\int_0^1|D_n(x)|\,dx\geq C\rho^n\mbox{ for $n$ large.}$$

\item [{(ii)}] There exist a continuous function $f$ such that the scambled Fourier series at zero $s_n(f; 0)$  in \eqref{scambledfourier.def} is unbounded.
\end{itemize}
\end{corollary}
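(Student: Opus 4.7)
The plan is to reduce the corollary directly to the two main results already established in the paper, namely Theorem \ref{th4.4} for part (i) and Theorem \ref{th2.3}(ii) for part (ii). First I would verify that, for $B=\{0,1,\dots,R-1\}$, the standing assumptions \eqref{eq4.1} and \eqref{eq4.2} of Section \ref{sec3} hold: $B$ is plainly a complete set of representatives for $\bz/R\bz$, the set $L$ is so by hypothesis, and since $1\in B$ (as $R>1$) the greatest common divisor of $B$ is $1$. Hence Theorem \ref{th4.2} applies: the invariant measure $\mu$ is Lebesgue measure restricted to the attractor $X_B$, which tiles $\br$ by $\bz$ with $\mathcal L(X_B)=1$.

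Next I would identify the scrambled objects in Definition \ref{def4.6} with the general ones in Definition \ref{def1.7}. Under the hypothesis, $X_B=[0,1]$ up to a set of Lebesgue measure zero, so Lemma \ref{lem4.5} gives $\int f\,d\mu=\int_0^1 f(x)\,dx$ for every $\bz$-periodic integrable $f$. In particular the scrambled Dirichlet kernel $D_n$ in \eqref{scambleddirichlet.def} coincides with the Dirichlet kernel of Definition \ref{def1.7}, and its $L^1(\mu)$-norm equals its $L^1([0,1])$-norm.

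For part (i), I would apply Theorem \ref{th4.4} directly: the hypothesis $\Delta(p_L)>1$ is exactly the hypothesis needed there, and the conclusion $\int|D_n|\,d\mu\ge C\rho^n$ translates, via Lemma \ref{lem4.5} applied to the $\bz$-periodic function $|D_n|$, into $\int_0^1|D_n(x)|\,dx\ge C\rho^n$, which is the desired bound.

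For part (ii), I would invoke Theorem \ref{th2.3}(ii). To do so it suffices to check $\Delta(Nm_L)>1$, which is exactly the content of the identification $\Delta(Nm_L)=\Delta(p_L)$ used at the end of the proof of Theorem \ref{th4.4}: we have $Nm_L(x)=p_L(e^{2\pi i x})$ and, by Lemma \ref{lem4.5},
\begin{equation*}
\int\log|Nm_L(x)|\,d\mu(x)=\int_0^1\log|p_L(e^{2\pi i x})|\,dx,
\end{equation*}
so exponentiating gives $\Delta(Nm_L)=\Delta(p_L)>1$. Theorem \ref{th2.3}(ii) then produces a continuous function $f$ on $X_B=[0,1]$ with $s_n(f;0)$ unbounded; since $[0,1]$ is identified with $X_B$ and the partial sums in \eqref{eqsn} reduce to those in \eqref{scambledfourier.def} under this identification, this yields the required continuous function. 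The only routine point of care is the passage between $\br$-continuity and $X_B$-continuity for $f$, which causes no difficulty because $X_B=[0,1]$ and one may take $f$ vanishing at the endpoints or periodise. No genuine obstacle arises; the substance of the corollary is entirely contained in Theorems \ref{th4.4} and \ref{th2.3}, and the work is only to unwind the definitions.
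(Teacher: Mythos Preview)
Your proposal is correct and follows exactly the route indicated by the paper, which simply states that the corollary follows from Theorems \ref{th4.4} and \ref{th2.3} once one takes $B=\{0,\dots,R-1\}$. You have just made explicit the verification of the standing hypotheses and the identification (via Lemma \ref{lem4.5}) of the $L^1(\mu)$-norm with the $L^1([0,1])$-norm and of $\Delta(Nm_L)$ with $\Delta(p_L)$.
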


\begin{remark} Associated with a complete set $L$ of representatives for $\bz/ R\bz$, the sets $\Lambda_n, n\ge 0$, in Definition \ref{def4.6} 
 give a rearrangement of the integers with certain algebraic structure, cf. Example \ref{ex3.7} and \ref{ex3.8}.
 By Corollary \ref{cor4.7}, if
 the Mahler measure $\Delta(p_L)$ of the complete set $L$ is strictly  larger than one, then there is a continuous function and a rearrangement  associated with the complete set $L$ such that the rearrangement of its Fourier series  diverges at the origin.
 The pointwise convergence and divergence of   Fourier series and its rearrangements is one of fundamental  problems in Fourier analysis, see
 \cite{aubry06, Carleson66,  Fefferman73, KK66, Kolmogorov23, KonyaginT03, Konyagin08, Korner96, Korner99, Hunt67} and 
 the recent survey paper \cite{Konyagin06}. We remark that  Fourier series  in Corollary \ref{cor4.7}
 is rearranged  according to  frequencies, while in \cite{KonyaginT03, Korner96, Korner99}  Fourier series are reconstructed
  according to the amplitudes, a conventional greedy algorithm. One of the main advantages of the rearrangements described in our paper is that they still have a certain affine structure; they are obtained by applying simple dilations and translations to the original set $\Lambda_0$. 
\end{remark}

\begin{example}\label{ex3.8}
Take $R=3$, $B=\{0,1,2\}$, and $L=\{0,1,5\}$ (a complete set of representatives modulo $R=3$).
We list the first few sets $\Lambda_n$ explicitly. The extreme $L$-cycles are $\{0\}$ and $\{1,2\}$, therefore
$$\Lambda_0=\{0,-1,-2\}.$$
Then, we use \eqref{eq1.12} to obtain the sets $\Lambda_n$ inductively:
$$\Lambda_1=\{-6, -5, -3, -2, -1, 0, 1, 2, 5\},$$
\begin{eqnarray*}
\Lambda_2&=\{-18, -17, -15, -14, -13, -10, -9, -8, -6, -5, -4, -3, -2, -1, 0, 1,\\
&2, 3, 4, 5, 6, 7, 8, 11, 15, 16, 20\},
\end{eqnarray*}
\begin{eqnarray*}
\Lambda_3&=&\{-54, -53, -51, -50, -49, -46, -45, -44, -42, -41, -40, -39, -38, -37,
-34, -30, -29,\\ & &-27, -26, -25, -24, -23, -22, -19, -18, -17, -15, -14,
-13, -12, -11, -10, -9, -8, \\& &-7, -6,-5, -4, -3, -2, -1, 0, 1, 2, 3,
4, 5, 6, 7, 8, 9, 10, 11, 12, 13, 14,15, 16, 17, 18, 19,\\& &  20, 21, 22,
23, 24, 25, 26, 29, 33, 34, 38, 45, 46, 48, 49, 50, 53, 60, 61, 65\}.
\end{eqnarray*}
Note that there are many gaps in the sets $\Lambda_n$. We have
$$\min\Lambda_n=-2\cdot 3^n,\quad\max\Lambda_n=\sum_{k=0}^{n-1}5\cdot 3^k=\frac{5}{2}(3^n-1).$$
The set $\Lambda_n$ has $3^{n+1}$ elements. Thus the set $\Lambda_n$ contains only a fraction of
$$\frac{3^{n+1}}{\frac{5}{2}(3^n-1)-(-2\cdot 3^n)+1}\approx\frac23$$
from the integers in the range $[\min\Lambda_n,\max\Lambda_n]$.
Notice that the polynomial
$$p_L(z)=1+z+z^5$$
has a root $0.877439 - 0.744862 i$ which has absolute value $1.15096$. Therefore its Mahler measure is strictly bigger than 1, see Remark \ref{rem3.5}. By Corollary \ref{cor4.7}, the $L^1$-norm of the Dirichlet kernel grows exponentially fast, and there exists a continuous  periodic function $f$
such that the scambled Fourier series at zero $s_n(f; 0)$  is unbounded.
\end{example}

\noindent {\bf Example \ref{ex3.7} (revisited)}\
Take $R\ge 2$, $B:=\{0,\dots,R-1\}$ and $L=\{0,\dots,R-1\}$. Then 
$$p_L(z)=1+z+\dots+z^{R-1},\quad Nm_L(x)=p_L(e^{2\pi i x}),\quad(x\in\br,z\in\bc).$$
The roots of $p_L$ are the non-trivial roots of unity, therefore the Mahler measure of $p_L$ (by Remark \ref{rem3.5}) is 1, and it is equal to $\Delta(Nm_L)$.
%
The Dirichlet kernel
$$D_n(x)=\sum_{k=-R^n}^{R^n-1}e^{2\pi ikx}=e^{-\pi ix} \frac{\sin (2\pi R^nx)}{\sin \pi x},$$ 
satisfies
$$\| D_n\|_1\approx\log R^n=n \log R$$
\cite{Zyg02}.
With Theorem \ref{th3.1}, we obtain that the number defined in \eqref{eq3.1.2} in relation to the Ruelle operator $R_L$,
$\mathfrak D=1$. By Ruelle's theorem \ref{th3.4}, there exists a fixed point $h\geq0$, $h\in L^\infty[0,1]$ such that
$R_Lh=h$. However such an $h$ cannot be bounded away from zero, because with Proposition \ref{pr3.3} that would imply that the $L^1$-norm of the Dirichlet kernel is bounded.
Thus for this particular choice of $R,B,L$, the Ruelle operator $R_L$ has no bounded fixed points which are bounded away from zero.

\bigskip

We end this paper with two questions:
\begin{question}
Fix the integer $R>1$. What is
$$\mathfrak d_R:=\sup\left\{\Delta(p_L) : L\mbox{ is a complete set of representatives modulo $R$}\right\}?$$
\end{question}

If $\mathfrak d_R$ is big, this means, by Corollary \ref{cor4.7}, that we can find scramblings of the Fourier series that have an exponential growth of the $L^1$-norm of the Dirichlet kernel, with as bad a rate as $\mathfrak d_R$.

\begin{remark}
We can get an easy upper bound for $\mathfrak d_R$. We have
\begin{equation}
\mathfrak d_R\leq \sqrt{R}.
\label{eqrdr}
\end{equation}
Indeed, take $L$ a complete set of representatives modulo $R$. Using Jensen's inequality we have

$$\int_0^1\log|p_L(e^{2\pi ix})|\,dx=\frac12\int_0^1\log|p_L(e^{2\pi ix})|^2\,dx\leq \frac12\log\left(\int_0^1|p_L(e^{2\pi ix})|^2\,dx\right).$$
But
$$\int_0^1|p_L(e^{2\pi ix})|^2\,dx=\int_0^1\sum_{l,l'\in L}e^{2\pi i(l-l')x}\,dx=R.$$

This implies that
$$\Delta(p_L)=\exp\left(\int_0^1\log|p_L(e^{2\pi i x})|\,dx\right)\leq \exp\left(\frac12\log R\right)=\sqrt{R}.$$
Therefore $\mathfrak d_R\leq \sqrt{R}.$
\end{remark}
\begin{question}
Can one find $R>1$ and $L$ a complete set of representatives modulo $R$, such that the Ruelle operator $R_L$ has a fixed point $h$ with $0<c\leq h\leq C$, $R_Lh=h$?
\end{question}

If this is true, then by Proposition \ref{pr3.3}, there we can find a scrambling of the Fourier series that has a Dirichlet kernel bounded in $L^1$-norm.

\begin{acknowledgements}
We would like to thank Professors Palle Jorgensen, Keri Kornelson and Gabriel Picioroaga for helpful discussions and ideas. We also thank the anonymous referee for his/her suggestions and for pointing out several important references. 
\end{acknowledgements}
\bibliographystyle{alpha}
\bibliography{divergence_revised}

\begin{thebibliography}{Wan99}

\bibitem[Alr10]{Alr10}
Bengt Alrud.
\newblock Fractal spectral measures in two dimensions.
\newblock {\em PhD thesis, University of Central Florida}, 2010.

\bibitem[AO89]{ArkOs89}
G.~I. Arkhipov and K.~I. Oskolkov.
\newblock On a special trigonometric series and its applications.
\newblock {\em Math. USSR Sbornik}, 62:145--155, 1989.

\bibitem[Aub06]{aubry06}
Jean-Marie Aubry.
\newblock On the rate of pointwise divergence of fourier and wavelet series in
  $l^p$.
\newblock {\em J. Approx. Theory}, 138(1):97--111, 2006.

\bibitem[Bal00]{Bal00}
Viviane Baladi.
\newblock {\em Positive transfer operators and decay of correlations},
  volume~16 of {\em Advanced Series in Nonlinear Dynamics}.
\newblock World Scientific Publishing Co. Inc., River Edge, NJ, 2000.

\bibitem[Ban91]{Ba91}
Christoph Bandt.
\newblock Self-similar sets. {V}. {I}nteger matrices and fractal tilings of
  {${\bf R}^n$}.
\newblock {\em Proc. Amer. Math. Soc.}, 112(2):549--562, 1991.

\bibitem[BE95]{BoEr95}
Peter Borwein and Tam{\'a}s Erd{\'e}lyi.
\newblock {\em Polynomials and polynomial inequalities}, volume 161 of {\em
  Graduate Texts in Mathematics}.
\newblock Springer-Verlag, New York, 1995.

\bibitem[Car66]{Carleson66}
Lennart Carleson.
\newblock On convergence and growth of partial sums of {F}ourier series.
\newblock {\em Acta Math.}, 116:135--157, 1966.

\bibitem[DJ06]{DJ06}
Dorin~Ervin Dutkay and Palle E.~T. Jorgensen.
\newblock Iterated function systems, {R}uelle operators, and invariant
  projective measures.
\newblock {\em Math. Comp.}, 75(256):1931--1970 (electronic), 2006.

\bibitem[DJ07]{DJ07d}
Dorin~Ervin Dutkay and Palle E.~T. Jorgensen.
\newblock Fourier frequencies in affine iterated function systems.
\newblock {\em J. Funct. Anal.}, 247(1):110--137, 2007.

\bibitem[Edg08]{Edg08}
Gerald Edgar.
\newblock {\em Measure, topology, and fractal geometry}.
\newblock Undergraduate Texts in Mathematics. Springer, New York, second
  edition, 2008.

\bibitem[Elt87]{Elt87}
John~H. Elton.
\newblock An ergodic theorem for iterated maps.
\newblock {\em Ergodic Theory Dynam. Systems}, 7(4):481--488, 1987.

\bibitem[Fef73]{Fefferman73}
Charles~Louis Fefferman.
\newblock Pointwise convergence of {F}ourier series.
\newblock {\em Ann. of Math.}, 98:551--571, 1973.

\bibitem[Hun67]{Hunt67}
Richard~A. Hunt.
\newblock On the convergence of {F}ourier series.
\newblock In {\em Orthogonal Expansions and their Continuous Analogues (Proc.
  Conf., Edwardsville, Ill., 1967)}, pages 235--255. Southern Illinois Univ.
  Press, Carbondale, Ill., 1967.

\bibitem[Hut81]{Hut81}
John~E. Hutchinson.
\newblock Fractals and self-similarity.
\newblock {\em Indiana Univ. Math. J.}, 30(5):713--747, 1981.

\bibitem[JKS07]{MR2338387}
Palle E.~T. Jorgensen, Keri~A. Kornelson, and Karen~L. Shuman.
\newblock Affine systems: asymptotics at infinity for fractal measures.
\newblock {\em Acta Appl. Math.}, 98(3):181--222, 2007.

\bibitem[JP98]{JoPe98}
Palle E.~T. Jorgensen and Steen Pedersen.
\newblock Dense analytic subspaces in fractal {$L\sp 2$}-spaces.
\newblock {\em J. Anal. Math.}, 75:185--228, 1998.

\bibitem[KK66]{KK66}
Jean-Pierre Kahane and Yitzhak Katznelson.
\newblock Sur les ensembles de divergence des s\'eries trigonom\'etriques.
\newblock {\em Studia Math.}, 26:305--306, 1966.

\bibitem[Kol23]{Kolmogorov23}
Andrey Kolmogorov.
\newblock Une s\'erie de {F}ourier–{L}ebesgue divergente presque partout.
\newblock {\em Fundamenta Math.}, 4:324--328, 1923.

\bibitem[Kon06]{Konyagin06}
Sergey~V. Konyagin.
\newblock Almost everywhere convergence and divergence of {F}ourier series.
\newblock In {\em Proceeding of the International Congress of Mathematics,
  Madrid, 2006}, pages 1393--1403. European Mathematical Society, 2006.

\bibitem[Kon08]{Konyagin08}
Sergey~V. Konyagin.
\newblock On uniformly convergent rearrangements of trigonometric {F}ourier
  series.
\newblock {\em J. Math. Sci.}, 155:81--88, 2008.

\bibitem[K{\"o}r96]{Korner96}
T.~W. K{\"o}rner.
\newblock Divergence of decreasing rearranged {F}ourier series.
\newblock {\em Ann. of Math.}, 144:167--180, 1996.

\bibitem[K{\"o}r99]{Korner99}
T.~W. K{\"o}rner.
\newblock Decreasing rearranged {F}ourier series.
\newblock {\em J. Fourier Anal. Appl.}, 5:1--19, 1999.

\bibitem[KT03]{KonyaginT03}
S.~V. Konyagin and V.~N. Temlyakov.
\newblock Convergence of greedy approximation ii. the trigonometric system.
\newblock {\em Studia Math.}, 159:161--184, 2003.

\bibitem[{\L}W02]{MR1929508}
Izabella {\L}aba and Yang Wang.
\newblock On spectral {C}antor measures.
\newblock {\em J. Funct. Anal.}, 193(2):409--420, 2002.

\bibitem[Rud87]{Rud87}
Walter Rudin.
\newblock {\em Real and complex analysis}.
\newblock McGraw-Hill Book Co., New York, third edition, 1987.

\bibitem[Str00]{MR1785282}
Robert~S. Strichartz.
\newblock Mock {F}ourier series and transforms associated with certain {C}antor
  measures.
\newblock {\em J. Anal. Math.}, 81:209--238, 2000.

\bibitem[Str06]{MR2279556}
Robert~S. Strichartz.
\newblock Convergence of mock {F}ourier series.
\newblock {\em J. Anal. Math.}, 99:333--353, 2006.

\bibitem[Vin85]{Vino85}
I.~M. Vinogradov.
\newblock The method of trigonometric sums in number theory.
\newblock In {\em Selected works}, pages 183--185. Springer-Verlag, 1985.

\bibitem[Wan99]{Wan99}
Yang Wang.
\newblock Self-affine tiles.
\newblock In {\em Advances in wavelets ({H}ong {K}ong, 1997)}, pages 261--282.
  Springer, Singapore, 1999.

\bibitem[Zyg02]{Zyg02}
A.~Zygmund.
\newblock {\em Trigonometric series. {V}ol. {I}, {II}}.
\newblock Cambridge Mathematical Library. Cambridge University Press,
  Cambridge, third edition, 2002.
\newblock With a foreword by Robert A. Fefferman.

\end{thebibliography}

\end{document}